\newcommand{\setR}{\mathbb{R}}
\newcommand{\setC}{\mathbb{C}}
\newcommand{\setZ}{\mathbb{Z}}
\newcommand{\setQ}{\mathbb{Q}}
\newcommand{\setP}{\mathbb{P}}
\newtheorem{definition}{Definition}
\newtheorem{theorem}[definition]{Theorem}
\newtheorem{lemma}[definition]{Lemma}
\newcommand{\set}[1]{\left\lbrace #1 \right\rbrace}
\newcommand{\card}[1]{\#\set{#1}}
\newcommand{\divides}{\mid}
\DeclareMathOperator{\dfunc}{d}
\DeclareMathOperator{\Img}{Im}
\DeclareMathOperator{\Ker}{Ker}
\begin{document}
\title{Counting points on bilinear and trilinear hypersurfaces}
\author{Thomas Reuss\\ 
		Mathematical Institute, University of Oxford\\
		reuss@maths.ox.ac.uk}
\maketitle

\begin{abstract}
Consider an irreducible bilinear form $f(x_1,x_2;y_1,y_2)$ with integer coefficients. We derive an upper bound for the number of integer points $(\mathbf{x},\mathbf{y})\in\mathbb{P}^1\times\mathbb{P}^1$ inside a box satisfying the equation $f=0$. Our bound seems to be the best possible bound and the main term decreases with a larger determinant of the form $f$. We further discuss the case when $f(x_1,x_2;y_1,y_2;z_1,z_2)$ is an irreducible non-singular trilinear form defined on $\mathbb{P}^1\times \mathbb{P}^1\times\mathbb{P}^1$, with integer coefficients. In this case, we examine the singularity and reducibility conditions of $f$. To do this, we employ the Cayley hyperdeterminant $D$ associated to $f$. We then derive an upper bound for the number of integer points in boxes on such trilinear forms. The main term of the estimate improves with larger $D$. Our methods are based on elementary lattice results.
\end{abstract}


\section{Bilinear Forms}
We note the following fact about bilinear forms:
\begin{lemma}
Let $A=(a_{ij})$ be a $2\times 2$ matrix with associated bilinear form
$f(\mathbf{x},\mathbf{y})=\mathbf{x}^TA\mathbf{y}$. Then $f$ is irreducible if and only if
$\det(A)\neq 0$. If $f$ is reducible then it factorizes into a product of two linear factors.
\end{lemma}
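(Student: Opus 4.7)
The plan is to prove both directions by the contrapositive, with the rank of $A$ as the key geometric invariant.

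For the easier direction, I would show that $\det(A)=0$ forces $f$ to split as a product of two linear forms. If $\det(A)=0$ then $\rank(A)\le 1$, so either $A$ is the zero matrix (in which case $f$ is the zero polynomial, trivially reducible) or $A=\mathbf{u}\mathbf{v}^T$ for some nonzero column vectors $\mathbf{u},\mathbf{v}\in\setQ^2$. In the latter case one computes directly
\[
f(\mathbf{x},\mathbf{y}) \;=\; \mathbf{x}^T(\mathbf{u}\mathbf{v}^T)\mathbf{y} \;=\; (\mathbf{u}^T\mathbf{x})(\mathbf{v}^T\mathbf{y}),
\]
which is a product of two linear factors. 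A small additional remark clears denominators so that the factorization can be taken with integer coefficients, which also disposes of the second sentence of the lemma.

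For the converse, assume $f=gh$ is a nontrivial factorization in $\setQ[x_1,x_2,y_1,y_2]$ (or $\setC[x_1,x_2,y_1,y_2]$; it does not matter since rank is field-independent). The key input is that this polynomial ring is a UFD, so bidegrees are additive under multiplication. Since $f$ has bidegree $(1,1)$ in $(\mathbf{x};\mathbf{y})$, the bidegrees $(a_i,b_i)$ of $g$ and $h$ must satisfy $a_1+a_2=1$ and $b_1+b_2=1$ with $a_i,b_i\ge 0$. The case $(a_1,b_1)=(1,1),\,(a_2,b_2)=(0,0)$ is ruled out by nontriviality, so up to relabelling $g=\mathbf{a}^T\mathbf{x}$ and $h=\mathbf{b}^T\mathbf{y}$ for some vectors $\mathbf{a},\mathbf{b}$. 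Then $f(\mathbf{x},\mathbf{y})=\mathbf{x}^T(\mathbf{a}\mathbf{b}^T)\mathbf{y}$, and by the uniqueness of the coefficient matrix of a bilinear form one concludes $A=\mathbf{a}\mathbf{b}^T$, so $\rank(A)\le 1$ and $\det(A)=0$, contrary to hypothesis.

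The only mildly subtle point is the bidegree bookkeeping in the converse: one must justify that a factor of positive total degree whose monomials are all bilinear in $(\mathbf{x};\mathbf{y})$ really is homogeneous of pure type $(1,0)$ or $(0,1)$. I would handle this by grading $\setQ[x_1,x_2,y_1,y_2]$ by bidegree, noting that $f$ is bihomogeneous, and invoking the standard fact that in a polynomial ring the irreducible factors of a bihomogeneous element are themselves bihomogeneous. This is the only step that is not a one-line computation, and it is the place where the argument would fail if one tried to extend the lemma to higher-dimensional bilinear forms with more subtle rank stratification.
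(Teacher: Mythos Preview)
Your argument is correct. The paper itself does not give a proof beyond declaring the result well known and verifiable by direct calculation, so your rank-based write-up (factoring a rank~$\le 1$ matrix as $\mathbf{u}\mathbf{v}^T$, and conversely reading off $\rank A\le 1$ from a bihomogeneous factorization via additivity of bidegrees) is precisely the kind of direct verification the paper has in mind, just spelled out in more detail.
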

\begin{proof}
This is a well known result and can quickly be verified by direct calculation.
\end{proof}

We will prove the following theorem:
\begin{theorem} \label{thm:thmbilinear}
Let $A=(a_{ij})$ be a $2\times 2$ matrix with associated bilinear form 
\[
	f(\mathbf{x},\mathbf{y})=\mathbf{x}^TA\mathbf{y}
\]
and determinant $\Delta:=\det(A)$. Suppose that $\gcd(a_{11},a_{21},a_{12},a_{22})=1$.
We assume that $f$ is irreducible over $\setZ$ so that $\Delta\neq 0$.
Let $X_1,X_2,Y_1,Y_2$ be real numbers $\geq 1$ and define
\[
	\mathcal{N}(\mathbf{X},\mathbf{Y})=\card{(\mathbf{x},\mathbf{y}): |x_i|\leq X_i,
	|y_i|\leq Y_i, (x_1,x_2)=(y_1,y_2)=1, f(\mathbf{x},\mathbf{y})=0}.
\]
Then
\[
	\mathcal{N}(\mathbf{X},\mathbf{Y})\ll
	\min\left\{X_1X_2,Y_1Y_2,\dfunc(\Delta)\left(\sqrt{\frac{X_1X_2Y_1Y_2}{|\Delta|}}+1\right)\right\}.
\]
\end{theorem}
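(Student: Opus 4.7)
The plan is to prove the three bounds $X_1 X_2$, $Y_1 Y_2$ and $\dfunc(\Delta)(\sqrt{X_1 X_2 Y_1 Y_2/|\Delta|}+1)$ separately, then take the minimum. The first two are immediate and symmetric in $(\mathbf{x}, \mathbf{y})$: if $\mathbf{x}$ is fixed primitive with $|x_i| \leq X_i$, the constraint $f(\mathbf{x}, \mathbf{y}) = (A^T\mathbf{x}) \cdot \mathbf{y} = 0$ is a nontrivial linear equation on $\mathbf{y}$ (since $\det A \neq 0$ forces $A^T \mathbf{x} \neq 0$), so a primitive $\mathbf{y}$ is determined up to sign. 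Counting the primitive $\mathbf{x}$ in the box yields $\mathcal{N} \ll X_1 X_2$, and the symmetric argument yields $\mathcal{N} \ll Y_1 Y_2$.

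For the third bound, observe that $f = 0$ is equivalent to $A^T \mathbf{x} = c(-y_2, y_1)^T$ for a unique nonzero integer $c$ once $\mathbf{x}, \mathbf{y}$ are primitive. Setting $\mathbf{v} = A^T \mathbf{x}$, Cramer's identity $\Delta \mathbf{x} = \mathrm{adj}(A^T)\mathbf{v}$ shows that $\gcd(v_1, v_2)$ divides $\Delta \gcd(x_1, x_2) = \Delta$. I therefore split $\mathcal{N}$ according to $|c| = d$, where $d$ ranges over the positive divisors of $\Delta$, using $\mathcal{N} \leq 2\sum_{d \divides \Delta} N_d$ with
\[
N_d := \#\bigl\{\mathbf{x} \in \mathbb{Z}^2 : d \divides A^T \mathbf{x},\ |x_i| \leq X_i,\ |(A^T \mathbf{x})_j| \leq d Y_{3-j}\bigr\}.
\]

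The next step is to bound $N_d$ by geometry of numbers. The congruence $d \divides A^T \mathbf{x}$ cuts out a sublattice $\Lambda_d \subseteq \mathbb{Z}^2$; using $\gcd(a_{ij})=1$ the Smith normal form of $A$ is $\mathrm{diag}(1, |\Delta|)$, and a direct calculation then yields $\mathrm{covol}(\Lambda_d) = d$. The remaining inequalities cut out a centrally symmetric convex body $B_d \subset \mathbb{R}^2$ contained both in $\{|x_i| \leq X_i\}$ (volume $4 X_1 X_2$) and in $(A^T)^{-1}\{|u_j| \leq d Y_{3-j}\}$ (volume $4 d^2 Y_1 Y_2/|\Delta|$); the inequality $\min(a,b) \leq \sqrt{ab}$ then gives $\mathrm{vol}(B_d) \leq 4d\sqrt{X_1 X_2 Y_1 Y_2 / |\Delta|}$. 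A standard lattice-point estimate yields $N_d \ll 1 + \mathrm{vol}(B_d)/\mathrm{covol}(\Lambda_d) \ll 1 + \sqrt{X_1 X_2 Y_1 Y_2/|\Delta|}$, and summing over the $\dfunc(\Delta)$ divisors of $\Delta$ produces the claimed bound.

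The hard part will be that last lattice-point inequality. In full generality $|\Lambda \cap B| \ll 1 + \mathrm{vol}(B)/\det(\Lambda)$ fails for two-dimensional sublattices and symmetric convex bodies: when $\Lambda$ has a short vector in a direction in which $B$ is elongated, an additional boundary term of order $1/\lambda_1$ (the first successive minimum in the $B$-norm) is needed. To close this gap I would either verify that the $1/\lambda_1$ correction for $(\Lambda_d, B_d)$ sums acceptably over $d \divides \Delta$, or split into cases on the aspect ratio of $B_d$ so that the long-and-thin regime is absorbed into the trivial bounds $X_1 X_2$ and $Y_1 Y_2$. Everything else---the divisibility $|c| \divides \Delta$, the covolume computation, and the divisor sum---is routine bookkeeping.
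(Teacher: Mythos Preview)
Your outline is essentially correct and runs parallel to the paper's argument: both split according to a divisor of $\Delta$ coming from the integer of proportionality in $A^T\mathbf{x}\parallel(-y_2,y_1)^T$, both compute that the relevant congruence lattice has covolume equal to that divisor, and both finish with a lattice-point count. The paper's packaging differs only in that it keeps the $\mathbf{x}$- and $\mathbf{y}$-constraints separate: for each factorisation $qq'=-\Delta$ it bounds the primitive $\mathbf{y}$ in a lattice of covolume $|q|$ inside the $Y$-ellipse and the primitive $\mathbf{x}$ in a lattice of covolume $|q'|$ inside the $X$-ellipse, and then takes $\min\{X_1X_2/|q'|,\,Y_1Y_2/|q|\}\le\sqrt{X_1X_2Y_1Y_2/|\Delta|}$. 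You instead intersect the two boxes into a single body $B_d$ and take $\min$ at the level of volumes; the arithmetic is the same.

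The one point you flag as hard is not, and the paper shows exactly why. You dropped the primitivity of $\mathbf{x}$ when defining $N_d$; put it back. A vector primitive in $\mathbb{Z}^2$ that lies in the sublattice $\Lambda_d\subset\mathbb{Z}^2$ is automatically primitive in $\Lambda_d$, and for \emph{primitive} points of a rank-two lattice inside a symmetric convex body the estimate $\#\{\cdot\}\ll \mathrm{vol}(B)/\det(\Lambda)+1$ holds with no $1/\lambda_1$ correction: if $\lambda_2>1$ the only lattice points in $B$ are multiples of the shortest vector, of which at most two are primitive; if $\lambda_2\le 1$ then $1/\lambda_2\ll 1/(\lambda_1\lambda_2)\asymp\mathrm{vol}(B)/\det(\Lambda)$ absorbs the boundary term. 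This is precisely the content of the Heath-Brown lemma the paper invokes (stated there for ellipses, but $B_d$ sits inside an ellipse of comparable area, or one can rerun the reduced-basis argument directly). With that observation your $N_d\ll\sqrt{X_1X_2Y_1Y_2/|\Delta|}+1$ is immediate and the divisor sum finishes the proof; no aspect-ratio case split is needed.
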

\begin{proof}
We want to count solutions to the equation
\[
	f(\mathbf{x},\mathbf{y})= x_1(a_{11}y_1+a_{12}y_2)+x_2(a_{21}y_1+a_{22}y_2)=0,
\]
say. Let us write this equation as
\[
	f(\mathbf{x},\mathbf{y})= x_1L_1(\mathbf{y})+x_2L_2(\mathbf{y})=0.
\]
First let us consider the case when $L_1(\mathbf{y})=0$. Note that $\mathbf{y}$ is a
primitive vector and hence in particular, it is non-zero.
If furthermore, $L_2(\mathbf{y})=0$ then we have a non-zero solution to the equation
$A\mathbf{y}=0$ which is impossible since $\Delta\neq 0$. Thus, we have $x_2=0$ which implies that
$x_1=\pm 1$ since $\mathbf{x}$ is a primitive vector. There are at most 4 choices for $\mathbf{y}$ such that  $L_1(\mathbf{y})=0$. This is easy to see after we divide $a_{11}$ and $a_{12}$ by $(a_{11},a_{12})$
in the equation $a_{11}y_1+a_{12}y_2=0$.
Thus, the case $L_1(\mathbf{y})=0$ contributes $O(1)$ to $\mathcal{N}(\mathbf{X},\mathbf{Y})$.
The case $L_2(\mathbf{y})=0$ is analogous. Now, if $x_1=0$ then $x_2=\pm 1$ which reduces to the
case $L_2(\mathbf{y})=0$. Thus, we may assume that
\[
	x_1x_2L_1(\mathbf{y})L_1(\mathbf{y})\neq 0.
\]
Since $x_1$ and $x_2$ are coprime, we can therefore deduce the existence of a non-zero integer $q$ such that
\begin{eqnarray*}
	-q x_1 &=& a_{21}y_1+a_{22}y_2,\\
	 q x_2 &=& a_{11}y_1+a_{12}y_2.
\end{eqnarray*}
If we set 
\[
	P=\left(\begin{array}{cc}
	0 & 1 \\ -1 & 0 \\	
	\end{array} \right),
\]
then we have $qP\mathbf{x}=A\mathbf{y}$.
Interchanging the roles of $\mathbf{x}$ and $\mathbf{y}$, we can similarly deduce that there
is a non-zero integer $q'$ such that $q'P\mathbf{y}=A^T\mathbf{x}$.
Combining the two equations, we get
\[
	qq'\mathbf{x}=-\Delta\mathbf{x}.
\]
The vector $\mathbf{x}$ is primitive and therefore non-zero. This allows us to conclude that
\[
	qq'=-\Delta.
\]
Since $\Delta$ is non-zero, there are $\dfunc(\Delta)$ choices for $q$.
We now fix one such choice and count the respective contribution to 
$\mathcal{N}(\mathbf{X},\mathbf{Y})$.
We first note that $q\divides A\mathbf{y}$. This will give us a lattice condition on $\mathbf{y}$, which is described in the following lemma:

\begin{lemma} \label{lem:lattice}
Fix an integer $m\geq 2$. Let $M=(m_{ij})$ be an $m\times 2$ matrix. Let $q$ be a non-zero integer such that $q$ divides all of the $m(m-1)/2$ minors of size $2\times 2$ of $M$. Furthermore, assume that there exists no prime $p\divides q$ which divides all the entries of $M$. Then
\[
	\Lambda_q=\{\mathbf{v}\in\setZ^2: q\divides M\mathbf{v}\}
\]
is a lattice of determinant $q$.
\end{lemma}
\begin{proof} (of Lemma) It is clear that $\Lambda_q$ is an integer lattice. We proceed to calculate
its determinant. We decompose $q=\prod p_i^{e_i}$ into its prime powers and consider the (additive) group homomorphism
\[
	\phi_i:\setZ^2\rightarrow \setZ^m/p_i^{e_i}\setZ^m
\]
given by $\phi_i(\mathbf{v})=M\mathbf{v}\ (\text{mod }p_i^{e_i})$. By assumption of the lemma,
there exists an element of $M$ which is not divisible by $p_i$. We assume without loss of generality that $p_i\nmid m_{11}$. The other cases are analogous. 
Let $m_{11}^{-1}$ be the multiplicative inverse of $m_{11}$ modulo $p_i^{e_i}$.
Let $\mathbf{w}\equiv m_{11}^{-1}(m_{11},m_{21},\ldots,m_{m1})^T\ (\text{mod }p_i^{e_i})$. We claim that
$\Img(\phi_i)$ is cyclic with generator $\mathbf{w}$. First note that for any 
$\lambda\in\setZ/p_i^{e_i}\setZ$,
$\phi_i(\lambda m_{11}^{-1},0)\equiv \lambda \mathbf{w}\ (\text{mod }p_i^{e_i})$.
Furthermore, assume $\mathbf{u}=M\mathbf{v}$. Then for $j=2,\ldots,m$:
\[
	u_1m_{j1}m_{11}^{-1} = m_{j1}v_1+m_{12}m_{j1}m_{11}^{-1}v_2\equiv m_{j1}v_1+m_{j2}v_2=u_j,
\]
where the last equality follows from the fact that $p_i^{e_i}$
divides the minor $m_{11}m_{i2}-m_{12}m_{i1}$.
Hence, we have indeed that $\mathbf{u}=u_1\mathbf{w}$, and $\mathbf{w}$ generates the image of $\phi_i$. We now consider the group homomorphism
\[
	\phi:\setZ^2\rightarrow \setZ^m/q\setZ^m
\]
given by $\phi(\mathbf{v})=M\mathbf{v}\ (\text{mod }q)$. By the Chinese Remainder Theorem and by what we
just showed, we have that $|\Img(\phi)|=q$. We note that $\Ker(\phi)=\Lambda_q$ and it follows by the
first Isomorphism Theorem for groups that  

\[
	\det(\Lambda_q)=[\setZ^2:\Lambda_q]=|\text{Im}(\phi)|=q,
\]
which proves the lemma.
\end{proof}

We now recall that $q\divides A\mathbf{y}$. Thus, Lemma \ref{lem:lattice} shows that $\mathbf{y}\in\Lambda_y$, where $\Lambda_y$ is a lattice of determinant $|q|$.
The bounds $|y_1|\leq Y_1$ and $|y_2|\leq Y_2$ let us deduce that $\mathbf{y}$ is inside the ellipse
defined by 
\[
	E_y=\left\{\mathbf{y}\in\setR^2:
	    \left(\frac{y_1}{ \sqrt{2} Y_1}\right)^2+\left(\frac{y_2}{\sqrt{2} Y_2}\right)^2\leq 1\right\}.
\]
The area of this ellipse is $A(E_y)=2\pi Y_1 Y_2$.
Similarly, $\mathbf{x}\in E_x\cap \Lambda_x$, where $E_x$ is an ellipse of area $A(E_x)=2\pi X_1X_2$,
and $\Lambda_x$ is an integer lattice of determinant $|q'|$.
Thus, we have reduced the problem to one where we need to count primitive lattice points inside an ellipse. We will employ Lemma 2 of Heath-Brown \cite{RHBapprox}, which we state here for convenience:

\begin{lemma}\label{lem:lemEllipse}
Let $\Lambda\subseteq\setR^2$ be a lattice of determinant $\det(\Lambda)$.
Let $E\subseteq\setR^2$ be an ellipse, centered at the origin, together with its interior,
and let $A(E)$ be the area of $E$.
Then there is a positive number $\alpha=\alpha(\Lambda,E)$ and a basis $\mathbf{b}_1$,
$\mathbf{b}_2$ of $\Lambda$ such that $\lambda_1 \mathbf{b}_1+\lambda_2 \mathbf{b}_2\in E$
implies $|\lambda_1|\leq\alpha$ and $|\lambda_2|\leq A(E)/(\alpha\det(\Lambda).$
Furthermore, the number of primitive lattice points in $\Lambda$ contained in $E$ is at most
\[
	4\left(\frac{A(E)}{\det(\Lambda)}+1\right).
\]
\end{lemma}

Thus, 
\[
	\mathcal{N}(\mathbf{X},\mathbf{Y})\ll \sum_{qq'=-\Delta}
	\min\left\{\frac{X_1X_2}{|q'|},
			   \frac{Y_1Y_2}{|q|}\right\}+\dfunc(\Delta)
\]
Since $qq'=-\Delta$, the worst case for $q$ is when $q^2=|\Delta| Y_1Y_2/(X_1X_2)$.
Thus, we get the bound
\[
	\mathcal{N}(\mathbf{X},\mathbf{Y})\ll
	\dfunc(\Delta)\left(\sqrt{\frac{X_1X_2Y_1Y_2}{|\Delta|}}+1\right).
\]
The bound 
\[
	\mathcal{N}(\mathbf{X},\mathbf{Y})\ll \min(X_1X_2,Y_1,Y_2)
\]
can be deduced as follows.
There are $O(X_1X_2)$ choices for $\mathbf{x}$. Fix one such $\mathbf{x}$. The equation
$f(\mathbf{x},\mathbf{y})=0$ cannot be zero identically because $\mathbf{x}$ is a primitive vector and
$\Delta\neq 0$. Thus, there are $O(1)$ choices for $\mathbf{y}$. A similar argument yields the bound $O(Y_1Y_2)$. This proves the theorem.
\end{proof}\clearpage
\section{Trilinear Forms}
Let $A=(a_{ijk})$ be a $2\times 2\times 2$ hypermatrix. We associate with $A$ its hyperdeterminant
\[
\begin{split}
	D :=\ 
		& a_{122}^2 a_{211}^2 + a_{111}^{2}a_{222}^{2} +a_{212}^{2}a_{121}^{2}
		 +a_{112}^{2}a_{221}^{2} \\
		-&2 a_{{111}}a_{{122}}a_{{211}}a_{{222}} -2 a_{{211}}a_{{122}}a_{{112}}a_{{221}}
		 -2 a_{{211}}a_{{122}}a_{{212}}a_{{121}} -2 a_{{222}}a_{{111}}a_{{212}}a_{{121}} \\
		-&2 a_{{222}}a_{{111}}a_{{112}}a_{{221}} -2 a_{{112}}a_{{121}}a_{{212}}a_{{221}}
		 +4 a_{{112}}a_{{121}}a_{{211}}a_{{222}} +4 a_{{111}}a_{{122}}a_{{212}}a_{{221}}.
\end{split}
\]
We will give a brief outline of some properties of $D$ below. For some further details 
on hyperdeterminants, the reader may consult  the text by Gel'fand, Kapranov and Zelevinsky \cite{GelfandKapranovZelevinsky}.
We define a trilinear form $f=f_A:\setR^3\rightarrow\setR$ by
\[
	f(\mathbf{x},\mathbf{y},\mathbf{z})=\mathbf{x}^T (A\mathbf{z})\mathbf{y}.
\]
Here, $A\mathbf{z}$ denotes the standard hypermatrix-vector multiplication 
along the third dimension of $A$. In particular,
\[
	A\mathbf{z}=M_{xy}(\mathbf{z}):=
	z_{{1}} \left( 
		\begin {array}{cc} 
			a_{{111}}&a_{{121}} \\
			a_{{211}}&a_{{221}}
		\end {array} \right) 
	+z_{{2}} \left( 
		\begin {array}{cc} 
			a_{{112}}&a_{{122}}\\ 
			a_{{212}}&a_{{222}}
		\end {array} 
	\right)
\]
is an ordinary 2x2 matrix depending only on $\mathbf{z}$. We similarly define 2x2 matrixes 
$M_{yz}(\mathbf{x})$ and $M_{zx}(\mathbf{y})$ by
\[
	M_{yz}(\mathbf{x}):=
	x_{{1}} \left( 
		\begin {array}{cc} 
			a_{{122}}&a_{{121}} \\
			a_{{112}}&a_{{111}}
		\end {array} \right) 
	+x_{{2}} \left( 
		\begin {array}{cc} 
			a_{{222}}&a_{{221}}\\ 
			a_{{212}}&a_{{211}}
		\end {array} 
	\right)
\]
and
\[
	M_{xz}(\mathbf{y}):=
	y_{{1}} \left( 
		\begin {array}{cc} 
			a_{{111}}&a_{{112}} \\
			a_{{211}}&a_{{212}}
		\end {array} \right) 
	+y_{{2}} \left( 
		\begin {array}{cc} 
			a_{{121}}&a_{{122}}\\ 
			a_{{221}}&a_{{222}}
		\end {array} 
	\right),
\] 
so that
\[
	f(\mathbf{x},\mathbf{y},\mathbf{z})=\mathbf{x}^T M_{xy}(\mathbf{z})\mathbf{y}
	=\mathbf{y}^T M_{yz}(\mathbf{x})\mathbf{z}=\mathbf{z}^T M_{zx}(\mathbf{y})\mathbf{x}.
\]
By taking transposes on the equation $f(\mathbf{x},\mathbf{y},\mathbf{z})=0$ it makes sense to
define
\[
	M_{yx}(\mathbf{z}):=M_{xy}(\mathbf{z})^T,\quad
	M_{zy}(\mathbf{x}):=M_{yz}(\mathbf{x})^T,\quad
	M_{xz}(\mathbf{y}):=M_{zx}(\mathbf{y})^T.
\]
Furthermore, let
\begin{eqnarray*}
	\Delta_{xy}=\Delta_{xy}(\mathbf{z}) &:=& \det(M_{xy}(\mathbf{z})),\\
	\Delta_{yz}=\Delta_{yz}(\mathbf{x}) &:=& \det(M_{yz}(\mathbf{x})),\\
	\Delta_{zx}=\Delta_{zx}(\mathbf{y}) &:=& \det(M_{zx}(\mathbf{y})).
\end{eqnarray*}
Then 

\begin{align*}
	\Delta_{xy}(\mathbf{z})&=
	\left( a_{{111}}a_{{221}}-a_{{121}}a_{{211}} \right) z_{{1}}^{2}+
 \left( a_{{111}}a_{{222}}+a_{{112}}a_{{221}} -a_{{212}}a_{{121}}-a_{{
211}}a_{{122}} \right) z_{{2}}z_{{1}}\\&+ \left( a_{{112}}a_{{222}}-a_{{
122}}a_{{212}} \right) z_{{2}}^{2},\\
\Delta_{yz}(\mathbf{x})&=
	\left( a_{{111}}a_{{122}}-a_{{112}}a_{{121}} \right) x_{{1}}^{2}+
 \left( a_{{111}}a_{{222}}+a_{{211}}a_{{122}}-a_{{112}}a_{{221}}-a_{{
212}}a_{{121}} \right) x_{{2}}x_{{1}}\\&+ \left( a_{{211}}a_{{222}}-a_{{
212}}a_{{221}} \right) x_{{2}}^{2},\\
\Delta_{zx}(\mathbf{y})&=
	\left( a_{{111}}a_{{212}}-a_{{112}}a_{{211}} \right) y_{{1}}^{2}+
 \left( a_{{111}}a_{{222}}+a_{{212}}a_{{121}}-a_{{112}}a_{{221}}-a_{{
211}}a_{{122}} \right) y_{{2}}y_{{1}}\\&+ \left( a_{{121}}a_{{222}}-a_{{
122}}a_{{221}} \right) y_{{2}}^{2}.
\end{align*}

\begin{lemma}
	The discriminants of the quadratic forms $\Delta_{xy}$, 
	$\Delta_{yz}$ and $\Delta_{zx}$ are all equal to $D$.
\end{lemma}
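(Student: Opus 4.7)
The plan is to reduce the claim to a direct polynomial identity. Recall that the discriminant of a binary quadratic form $Az^2 + Bzw + Cw^2$ is $B^2 - 4AC$. I would read off from the displayed expansion of $\Delta_{xy}(\mathbf{z})$ the coefficients
\begin{align*}
A &= a_{111}a_{221} - a_{121}a_{211},\\
B &= a_{111}a_{222} + a_{112}a_{221} - a_{212}a_{121} - a_{211}a_{122},\\
C &= a_{112}a_{222} - a_{122}a_{212},
\end{align*}
and verify $B^2 - 4AC = D$ by direct expansion. Writing $B = p+q-r-s$ with $p=a_{111}a_{222}$, $q=a_{112}a_{221}$, $r=a_{212}a_{121}$, $s=a_{211}a_{122}$, one immediately sees that $p^2,q^2,r^2,s^2$ supply the four square terms of $D$. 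The six cross-terms $\pm 2pq,\pm 2pr,\ldots$ combined with the four monomials in $4AC$ should then reproduce, coefficient by coefficient, the remaining eight terms of $D$, including the two $+4$ contributions which arise precisely from $-4AC$ since the products $a_{112}a_{121}a_{211}a_{222}$ and $a_{111}a_{122}a_{212}a_{221}$ do not appear in $B^2$.

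Having established the identity $B^2 - 4AC = D$ for $\Delta_{xy}$, I would then handle the other two forms in one of two ways. The most efficient is to observe that the polynomial $D$, as written, is invariant under the $S_3$-action permuting the three axes of the hypermatrix $A=(a_{ijk})$: a cyclic relabelling of the indices $(i,j,k)\mapsto(j,k,i)$ carries the trilinear form into itself (up to the corresponding permutation of the arguments $\mathbf{x},\mathbf{y},\mathbf{z}$) and permutes the three matrices $M_{xy},M_{yz},M_{zx}$ and hence the three discriminant forms. Since such a relabelling leaves $D$ unchanged, the identity $\mathrm{disc}(\Delta_{xy})=D$ automatically propagates to $\Delta_{yz}$ and $\Delta_{zx}$. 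Alternatively, one may simply repeat the same direct expansion twice more; both calculations are mechanically identical to the first.

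The only genuine obstacle is bookkeeping: one must check that each of the twelve monomials constituting $D$ arises with the correct coefficient, and that no stray monomials appear. The use of the symmetry argument above reduces this obstacle to a single verification. No conceptual input beyond the definition of the discriminant and the explicit form of $D$ is required.
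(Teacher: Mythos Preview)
Your proposal is correct and follows essentially the same approach as the paper: the paper's proof simply says ``this can be verified by direct calculation'' and displays the three identities $D = B^2 - 4AC$ that must hold, which is exactly the computation you outline. Your $S_3$-symmetry observation is a pleasant shortcut that the paper does not invoke, but since you also note that one may ``simply repeat the same direct expansion twice more,'' the two arguments are substantively the same.
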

\begin{proof}
This can be verified by direct calculation. It suffices to show that
\begin{align*}
	D &=\left( a_{{111}}a_{{222}}+a_{{112}}a_{{221}} -a_{{212}}a_{{121}}-a_{{
211}}a_{{122}} \right)^2\\
	&\qquad\qquad -4\left( a_{{111}}a_{{221}}-a_{{121}}a_{{211}} \right)
\left( a_{{112}}a_{{222}}-a_{{
122}}a_{{212}} \right)\\
      &=\left( a_{{111}}a_{{222}}+a_{{211}}a_{{122}}-a_{{112}}a_{{221}}-a_{{
212}}a_{{121}} \right) ^{2}\\
	&\qquad\qquad -4\, \left( a_{{111}}a_{{122}}-a_{{112}}a_{
{121}} \right)  \left( a_{{211}}a_{{222}}-a_{{212}}a_{{221}} \right) \\
	&=\left( a_{{111}}a_{{222}}+a_{{212}}a_{{121}}-a_{{112}}a_{{221}}-a_{{
211}}a_{{122}} \right) ^{2} \\
	&\qquad\qquad -4\, \left( a_{{111}}a_{{212}}-a_{{112}}a_{
{211}} \right)  \left( a_{{121}}a_{{222}}-a_{{122}}a_{{221}} \right) .
\end{align*}
\end{proof}

\begin{lemma}\label{lem:DQ0}
	Assume that 
	\[
		Q(\mathbf{x})=ax_1^2+bx_1x_2+cx_2^2\in\setZ[\mathbf{x}]
	\]
	is a binary quadratic form with discriminant $D(Q)=b^2-4ac=0$. Then, $D(Q)=0$ if and only if
	$Q(\mathbf{x})=C L(\mathbf{x})^2$ for some (possibly zero) $C\in\setQ$, 
	and a linear form $L(\mathbf{x})\in\setZ[\mathbf{x}]$ with integer coefficients.
\end{lemma}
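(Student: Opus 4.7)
The plan is to prove the equivalence by direct algebraic manipulation, handling the two directions separately and splitting into cases on whether the leading coefficient of $Q$ vanishes. This lemma is elementary and I do not expect any genuine obstacle; the only mild subtlety is ensuring that the extracted linear form $L$ has \emph{integer} coefficients while allowing $C$ to be rational.

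For the easy (backward) direction, I would write $L(\mathbf{x}) = \alpha x_1 + \beta x_2$ with $\alpha,\beta\in\setZ$ and expand $Q = C L^2 = C\alpha^2 x_1^2 + 2C\alpha\beta\, x_1 x_2 + C\beta^2 x_2^2$. Computing $b^2 - 4ac = (2C\alpha\beta)^2 - 4(C\alpha^2)(C\beta^2)$ gives $0$ immediately.

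For the forward direction, assume $b^2 = 4ac$. I would consider two cases. If $a\neq 0$, I would multiply through by $4a$ and observe the identity
\[
	4a\, Q(\mathbf{x}) = 4a^2 x_1^2 + 4ab\, x_1 x_2 + 4ac\, x_2^2 = (2a x_1 + b x_2)^2,
\]
where the last equality uses precisely the hypothesis $4ac = b^2$. Hence $Q(\mathbf{x}) = \tfrac{1}{4a}(2a x_1 + b x_2)^2$, so we may take $C = 1/(4a) \in \setQ$ and $L(\mathbf{x}) = 2a x_1 + b x_2 \in \setZ[\mathbf{x}]$. If instead $a = 0$, then $b^2 = 4ac = 0$ forces $b = 0$, so $Q(\mathbf{x}) = c x_2^2$ and we take $C = c$, $L(\mathbf{x}) = x_2$ (with the degenerate sub-case $c = 0$ handled by $C = 0$). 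This exhausts all cases and establishes the lemma.
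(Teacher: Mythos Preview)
Your argument is correct. The paper actually states this lemma without proof, treating it as a standard fact about binary quadratic forms; your completing-the-square argument (multiplying by $4a$ when $a\neq 0$, and handling $a=0$ separately) is exactly the expected elementary verification and fills that gap cleanly.
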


\begin{lemma}\label{lem:factorization}\ 
\begin{enumerate}[i)]
	\item Assume that $\Delta_{xy}$ vanishes identically. Then $\Delta_{yz}$ or $\Delta_{zx}$ 
	also vanishes identically.
	\item If $\Delta_{xy}$ and $\Delta_{yz}$ vanish identically then $f$ factorizes over $\setQ$ as
		\begin{equation}\label{eq:eqfsplitstoLB}
			f(\mathbf{x},\mathbf{y},\mathbf{z}) = L(\mathbf{y})B(\mathbf{x},\mathbf{z}),
		\end{equation}
	where $L$ is a linear form,  and $B$ is a bilinear form.
	Furthermore, $\Delta_{zx}(\mathbf{y})=\det(B)L(\mathbf{y})^2$,
	where $\det(B)$ is the determinant of the matrix associated to $B$.
	\item If $f$ has a linear factor $L(\mathbf{y})$ over $\setZ$ then $\Delta_{xy}$ and 
	$\Delta_{yz}$ both vanish identically.
	\item $f$ splits in three linear factors over $\setZ$ if and only if $\Delta_{xy}$, 
	$\Delta_{yz}$ and $\Delta_{zx}$ all vanish identically.	
\end{enumerate}
\end{lemma}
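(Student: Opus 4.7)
The plan is to establish (iii) first by a direct computation, then (i) as the main technical step, and finally to deduce (ii) and (iv) by combining them. For (iii), write $L(\mathbf{y}) = \ell^T \mathbf{y}$ and $B(\mathbf{x},\mathbf{z}) = \mathbf{x}^T C \mathbf{z}$ with $C$ a $2 \times 2$ integer matrix. Comparing $f = L(\mathbf{y}) B(\mathbf{x},\mathbf{z})$ with the expression $f = \mathbf{x}^T M_{xy}(\mathbf{z}) \mathbf{y}$ identifies $M_{xy}(\mathbf{z}) = (C\mathbf{z})\ell^T$, a rank-one matrix, so $\Delta_{xy} \equiv 0$. The same comparison with $f = \mathbf{y}^T M_{yz}(\mathbf{x}) \mathbf{z}$ gives $M_{yz}(\mathbf{x}) = \ell (C^T \mathbf{x})^T$, again rank one, so $\Delta_{yz} \equiv 0$.

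For (i), I would read $\Delta_{xy}(\mathbf{z}) \equiv 0$ as the statement that the pencil $M_{xy}(\mathbf{z}) = z_1 A^{(1)} + z_2 A^{(2)}$ of $2 \times 2$ rational matrices is singular for every $\mathbf{z}$, where $A^{(k)}$ denotes the $k$-th slab of the hypermatrix $A$. Expanding
\[
    \det(z_1 A^{(1)} + z_2 A^{(2)}) = z_1^2 \det A^{(1)} + z_1 z_2 \operatorname{tr}(\operatorname{adj}(A^{(1)}) A^{(2)}) + z_2^2 \det A^{(2)}
\]
forces $\det A^{(1)} = \det A^{(2)} = 0$ together with a middle compatibility condition. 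I would then argue by a change of basis: replacing $A^{(1)}, A^{(2)}$ by $P A^{(1)} Q, P A^{(2)} Q$ for rational invertible $P, Q$ puts $A^{(1)}$ into either $0$ or $\operatorname{diag}(1,0)$, and inspecting what the compatibility condition leaves for $A^{(2)}$ gives a short case analysis. The outcome is that the pencil must admit either a common right null vector or a common left null vector in $\setQ^2$, which after clearing denominators lies in $\setZ^2$. A common right null vector $\mathbf{y}_0$ means $f(\mathbf{x}, \mathbf{y}_0, \mathbf{z}) \equiv 0$, so the integer linear form $L(\mathbf{y})$ vanishing at $\mathbf{y}_0$ divides $f$, and (iii) then forces $\Delta_{yz} \equiv 0$. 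A common left null vector $\mathbf{x}_0$ gives, by the symmetric analogue of (iii), $\Delta_{zx} \equiv 0$.

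For (ii), apply (i) to $\Delta_{xy} \equiv 0$ to obtain a linear factor, either $L(\mathbf{y})$ or $L(\mathbf{x})$. In the first case $f = L(\mathbf{y}) B(\mathbf{x},\mathbf{z})$ is already of the required shape. In the second case, $f = L(\mathbf{x}) B'(\mathbf{y},\mathbf{z})$; the same sort of calculation as in (iii) gives $\Delta_{yz}(\mathbf{x}) = L(\mathbf{x})^2 \det(B')$, so the hypothesis $\Delta_{yz} \equiv 0$ forces $\det(B') = 0$, hence $B'(\mathbf{y},\mathbf{z}) = N(\mathbf{y}) M(\mathbf{z})$ and $f = N(\mathbf{y}) \cdot [L(\mathbf{x}) M(\mathbf{z})]$, again of the required shape. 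Either way, writing $B(\mathbf{x},\mathbf{z}) = \mathbf{x}^T C \mathbf{z}$ and expanding $f = L(\mathbf{y}) \mathbf{z}^T C^T \mathbf{x}$ identifies $M_{zx}(\mathbf{y}) = L(\mathbf{y}) C^T$, so $\Delta_{zx}(\mathbf{y}) = L(\mathbf{y})^2 \det(C^T) = L(\mathbf{y})^2 \det(B)$. For (iv), the forward direction is (iii) applied in each of the three slots; the reverse uses (ii) to write $f = L(\mathbf{y}) B(\mathbf{x},\mathbf{z})$ with $\Delta_{zx}(\mathbf{y}) = \det(B) L(\mathbf{y})^2 \equiv 0$, forcing $\det(B) = 0$ and hence a further split of $B$ as a linear form in $\mathbf{x}$ times one in $\mathbf{z}$; Gauss's lemma lets us take all three factors with integer coefficients. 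The central difficulty is thus the pencil dichotomy in step (i); once that is in hand, (ii)--(iv) reduce to bookkeeping with the rank-one matrices produced by a linear factor.
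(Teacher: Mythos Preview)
Your proof is correct and takes a genuinely different route from the paper's. The paper proves (i) and (ii) by brute-force coordinate computation: assuming $a_{111}a_{222}\neq 0$, it reads off the three vanishing coefficients of $\Delta_{xy}$, substitutes the first two into $\Delta_{yz}$ and $\Delta_{zx}$ to obtain explicit factorizations of these as products of linear forms, and then uses the third equation to force one of the prefactors to vanish; the explicit factorization of $f$ is written down by hand, and the case $a_{111}a_{222}=0$ is left to the reader. Part (iii) is also done by writing out all eight $a_{ijk}$ in terms of the coefficients of $L$ and $B$ and checking directly, and (iv) is deduced from (ii) and (iii) as you do.

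Your approach replaces all of this with the structural observation that $\Delta_{xy}\equiv 0$ means the pencil $z_1A^{(1)}+z_2A^{(2)}$ is identically singular, and a short normal-form argument shows such a $2\times 2$ pencil has a common left or right kernel vector over~$\setQ$; this immediately produces a linear factor of $f$ in $\mathbf{x}$ or in $\mathbf{y}$, from which (i)--(iv) fall out via the rank-one identities $M_{xy}(\mathbf z)=(C\mathbf z)\ell^T$, etc. The payoff is a coordinate-free argument that avoids the case split on $a_{111}a_{222}$ and makes the mechanism (Kronecker's dichotomy for singular pencils) transparent; the paper's payoff is that one sees the explicit factorization of $f$ and of $\Delta_{yz},\Delta_{zx}$ in terms of the original $a_{ijk}$, which your argument does not directly provide. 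One small point worth making explicit in your write-up of (ii) and (iv): the degenerate case $f\equiv 0$ (i.e.\ both slabs zero) should be set aside at the outset, since otherwise ``$L\not\equiv 0$'' in your deduction $\det(B)=0$ is not automatic.
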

\begin{proof}
We prove the first two claims when $a_{111}a_{222}\neq 0$. Assume that $\Delta_{xy}$ vanishes
identically. Then   all the coefficients of $\Delta_{xy}$ vanish, that is:
\begin{align}
	a_{{111}}a_{{221}}-a_{{121}}a_{{211}} &=0\nonumber\\
	a_{{112}}a_{{222}}-a_{{122}}a_{{212}} &=0\label{eq:eqDeltavan1}\\
	 a_{{111}}a_{{222}}+a_{{112}}a_{{221}} -a_{{212}}a_{{121}}-a_{{211}}a_{{122}} &=0\nonumber.
\end{align}
Using just the first two equations, we get
\[
	\Delta_{yz}(\mathbf{x}) =
	{\frac { \left( a_{{111}}a_{{222}}-a_{{
212}}a_{{121}} \right) \left( x_{{1}}a_{{122}}+x_{{2}}a_{{222}} \right)  \left( x_{{
1}}a_{{111}}+x_{{2}}a_{{211}} \right) }{a_{{111}}a_{{222}}}}
\] 
and
\[
	\Delta_{zx}(\mathbf{y}) =
	{\frac { \left( a_{{111}}a_{{222}}-a_{{211}}a_{{122}} \right)\left( y_{{1}}a_{{212}}+y_{{2}}a_{{222}} \right)    \left( y_{{1}}a_{{111}}+y_
{{2}}a_{{121}} \right) }{a_{{111}}a_{{222}}}}.
\]
If we substitute the first two equations of \eqref{eq:eqDeltavan1} into the third one, we obtain
that
\[
	{\frac { \left( a_{{111}}a_{{222}}-a_{{211}}a_{{122}} \right) 
 \left( a_{{111}}a_{{222}}-a_{{212}}a_{{121}} \right) }{a_{{111}}a_{{
222}}}}=0.
\]
If $a_{{111}}a_{{222}}-a_{{211}}a_{{122}}=0$ then $\Delta_{zx}(\mathbf{y})$ vanishes identically
and
\[
	\Delta_{yz}(\mathbf{x})={\frac { 
 \left( a_{{212}}a_{{121}}-a_{{211}}a_{{122}} \right) }{a_{{222}}a_{{
122}}}} \left( x_{{1}}a_{{122}}+x_{{2}}a_{{222}} \right) ^{2}.
\]
Furthermore, $f(\mathbf{x},\mathbf{y},\mathbf{z})$ factorizes as follows:
\[
{\frac { \left( x_{{1}}a_{{122}}+x_{{2}}a_{{222}} \right)  \left( a_{{
122}}a_{{212}}z_{{2}}y_{{1}}+a_{{122}}y_{{1}}z_{{1}}a_{{211}}+a_{{122}
}y_{{2}}z_{{2}}a_{{222}}+y_{{2}}z_{{1}}a_{{121}}a_{{222}} \right) }{a_
{{222}}a_{{122}}}}.
\]
This proves i) and ii) in the case under consideration. In the case $a_{{111}}a_{{222}}-a_{{212}}a_{{121}}=0$, we note that $\Delta_{yz}(\mathbf{x})$ vanishes identically
and the calculations are similar. It is not difficult to verify i) and ii) if
$a_{111}a_{222}=0$.\bigskip\\
We now prove iii). If
\[
	 f(\mathbf{x},\mathbf{y},\mathbf{z})=\left( b_{{1}}y_{{1}}+b_{{2}}y_{{2}} \right)  \left( c_{{1}}x_{{1}}z_
{{1}}+c_{{2}}x_{{1}}z_{{2}}+c_{{3}}x_{{2}}z_{{1}}+c_{{4}}x_{{2}}z_{{2}
} \right)
\]
then
\begin{align*}
	a_{111} = b_1c_1,\ 
	&a_{112} = b_1c_2,\ 
	a_{121} = b_2c_1,\ 
	a_{122} = b_2c_2,\\
	a_{211} = b_1c_3,\ 
	&a_{221} = b_2c_3,\ 
	a_{212} = b_1c_4,\ 
	a_{222} = b_2c_4,\ 
\end{align*}
and it can easily be verified by direct calculation that $\Delta_{xy}$ and 
$\Delta_{yz}$ both vanish identically.
The statement iv) follows directly from ii) and iii).
\end{proof}

\begin{lemma}
	The following are equivalent
	\begin{enumerate}[i)]
		\item $D=0$.\label{it:item1}
		\item There exists a non-trivial point in $(\setP^1)^3$ at which all partial derivatives
			  of $f$ vanish.\label{it:item2}
		\item $f$ is singular in $(\setP^1)^3$. \label{it:item3}
	\end{enumerate}
\end{lemma}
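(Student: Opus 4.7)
My plan is to establish (ii) $\Leftrightarrow$ (iii) almost for free using Euler's identity, and then focus on the substantive equivalence (i) $\Leftrightarrow$ (ii). A point of the hypersurface $V(f)\subset(\setP^1)^3$ is singular iff all six partial derivatives $\partial f/\partial x_i,\partial f/\partial y_j,\partial f/\partial z_k$ vanish at some representative; and if all partials vanish at $(\mathbf{x}_0,\mathbf{y}_0,\mathbf{z}_0)$, the trilinear Euler identity $f=\mathbf{x}_0^T\nabla_x f=0$ places the point on $V(f)$ automatically. This first step reduces the task to showing ``$D=0$ iff $\nabla f$ vanishes non-trivially somewhere.''

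The crucial identity driving this is
\[
    \frac{\partial \Delta_{xy}}{\partial z_k}(\mathbf{z})
    =\sum_{i,j}\mathrm{cof}(M_{xy}(\mathbf{z}))_{ij}\,a_{ijk},
\]
obtained from $M_{xy}(\mathbf{z})_{ij}=\sum_k a_{ijk}z_k$ together with the standard formula $\partial\det(M)/\partial M_{ij}=\mathrm{cof}(M)_{ij}$. When $M_{xy}(\mathbf{z}_0)$ has rank one, with right-kernel spanned by $\mathbf{y}_0$ and left-kernel by $\mathbf{x}_0$, a direct check on $2\times 2$ matrices of the form $\mathbf{u}\mathbf{v}^T$ shows $\mathrm{cof}(M_{xy}(\mathbf{z}_0))=c\,\mathbf{x}_0\mathbf{y}_0^T$ for some non-zero scalar $c$, so the right hand side becomes $c\,\partial f/\partial z_k$ evaluated at $(\mathbf{x}_0,\mathbf{y}_0,\mathbf{z}_0)$. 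In short, at such rank-one points, the $\mathbf{z}$-gradient of $\Delta_{xy}$ agrees up to scale with the $\mathbf{z}$-gradient of $f$.

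Armed with this, (ii) $\Rightarrow$ (i) runs as follows: at a singular point, $M_{xy}(\mathbf{z}_0)\mathbf{y}_0=0$ and $M_{yx}(\mathbf{z}_0)\mathbf{x}_0=0$ with $\mathbf{x}_0,\mathbf{y}_0\neq 0$ force $\Delta_{xy}(\mathbf{z}_0)=0$, and then $\nabla_z f=0$ translates via the identity into $\nabla_z\Delta_{xy}(\mathbf{z}_0)=0$, so $\mathbf{z}_0$ is a double zero of the quadratic $\Delta_{xy}$ and its discriminant $D$ must vanish. For (i) $\Rightarrow$ (ii) I reverse this: Lemma \ref{lem:DQ0} says $D=0$ forces $\Delta_{xy}=CL(\mathbf{z})^2$ or $\Delta_{xy}\equiv 0$. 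In the perfect-square case, let $\mathbf{z}_0$ be the double root, pick $\mathbf{y}_0$ and $\mathbf{x}_0$ in the kernels of $M_{xy}(\mathbf{z}_0)$ and $M_{xy}(\mathbf{z}_0)^T$, and run the identity backwards to see $\nabla_z f(\mathbf{x}_0,\mathbf{y}_0,\mathbf{z}_0)$ is a multiple of $\nabla_z\Delta_{xy}(\mathbf{z}_0)=2CL(\mathbf{z}_0)\nabla L=0$. In the identically-zero case I appeal to Lemma \ref{lem:factorization} i)--ii) to obtain a factorization $f=L(\mathbf{y})B(\mathbf{x},\mathbf{z})$ (after possibly relabelling variables), where any pairing of $\mathbf{y}_0\in V(L)$ with $(\mathbf{x}_0,\mathbf{z}_0)\in V(B)$ kills every partial of $f$.

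The main obstacle I anticipate is bookkeeping in the degenerate subcases --- when $M_{xy}(\mathbf{z}_0)$ collapses to rank zero, or when several of $\Delta_{xy},\Delta_{yz},\Delta_{zx}$ vanish identically simultaneously. These I will dispatch by invoking the factorization guaranteed by Lemma \ref{lem:factorization} and exhibiting the singular point directly from the factorized form, rather than trying to push the cofactor identity through.
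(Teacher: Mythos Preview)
Your argument is correct and takes a genuinely different route from the paper. Both handle (ii)~$\Leftrightarrow$~(iii) via Euler, but for the main equivalence the paper proceeds asymmetrically: for (ii)~$\Rightarrow$~(i) it argues by contradiction, assuming $D\neq 0$ and using the two distinct roots of $\Delta_{xy}$ to diagonalise $f$ into the canonical form $L_1L_2L_3+L_1'L_2'L_3'$, from which it reads off that a singular point would force a linear factor and hence $D=0$; for (i)~$\Rightarrow$~(ii) it works symmetrically with all three determinants $\Delta_{xy},\Delta_{yz},\Delta_{zx}$, choosing a root of each and then arguing via the kernel relations that the six auxiliary vectors $\mathbf{x}_1,\mathbf{x}_2,\mathbf{y}_1,\mathbf{y}_2,\mathbf{z}_1,\mathbf{z}_2$ collapse pairwise. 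Your approach replaces all of this with a single identity, namely that at a rank-one point of $M_{xy}(\mathbf{z})$ the $\mathbf{z}$-gradient of $\Delta_{xy}$ coincides up to a nonzero scalar with the $\mathbf{z}$-gradient of $f$ evaluated at the kernel vectors; this converts ``all partials of $f$ vanish'' into ``$\mathbf{z}_0$ is a double root of $\Delta_{xy}$'' and conversely, in one stroke. Your treatment of the degenerate cases (rank zero, or $\Delta_{xy}\equiv 0$) via Lemma~\ref{lem:factorization} matches what the paper does. The gain of your method is economy and conceptual clarity; what the paper's longer (ii)~$\Rightarrow$~(i) argument buys is the explicit two-term tensor decomposition of $f$ when $D\neq 0$, which is of independent interest but not needed for the lemma itself.
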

\begin{proof}\ 
\begin{itemize}
	\item[\ref{it:item2} $\Rightarrow$ \ref{it:item1}]
	 Assume for a contradiction that all partial derivatives of $f$ vanish at a point
	 in $(\setP^1)^3$ and that $D\neq 0$.
	 Thus, by Lemma \ref{lem:DQ0}, there are precisely two distinct solutions 
	 $\mathbf{z_1}, \mathbf{z_2}\in\setP$ to the equation $\Delta_{xy}(\mathbf{z})=0$.
	 If $M_{xy}(\mathbf{z_1})$ is identically zero, then
	 \[
	 	M_{xz}(\mathbf{y})\mathbf{z_1}=M_{xy}(\mathbf{z_1})\mathbf{y}=\mathbf{0},
	 \]
	 for all $\mathbf{y}\in\setP$. This means that for all $\mathbf{y}\in\setP$, there is a non-zero
	 vector in the kernel of $M_{xz}(\mathbf{y})$. Thus, $\Delta_{xz}(\mathbf{y})$ vanishes identically
	 and therefore $D=0$, which contradicts our assumption. Therefore, $M_{xy}(\mathbf{z_1})$ cannot
	 vanish identically and there must be $\mathbf{y_2}\in\setP$ such that 
	 $M_{xy}(\mathbf{z_1})\mathbf{y_2}=0$. We note that $\mathbf{y_2}$ is unique (up to a scalar
	 multiple), since otherwise $M_{xy}(\mathbf{z_1})$ would vanish identically.
	 Similarly, there exists a unique $\mathbf{y_1}\in\setP$ such that
	  $M_{xy}(\mathbf{z_2})\mathbf{y_1}=0$. If $\mathbf{y_1}=\mathbf{y_2}$ then
	  \[
	  	M_{xz}(\mathbf{y_1})\mathbf{z_1}=M_{xz}(\mathbf{y_1})\mathbf{z_2}.
	  \]
	  But similarly to the above argument, there must be a unique $\mathbf{z}\in\setP$ such that
	  $M_{xz}(\mathbf{y_1})\mathbf{z}=0$. But, by assumption we have that 
	  $\mathbf{z_1}\neq\mathbf{z_2}$. Therefore, $\mathbf{y_1}\neq\mathbf{y_2}$.
	  Similarly, we can find $\mathbf{x_1},\mathbf{x_2}\in\setP$ with $\mathbf{x_1}\neq\mathbf{x_2}$
	  such that $M_{yx}(\mathbf{z_1})\mathbf{x_2}=0$ and $M_{yx}(\mathbf{z_2})\mathbf{x_1}=0$.
	  We can now write an arbitrary $(\mathbf{x},\mathbf{y},\mathbf{z})\in(\setP^1)^3$
	  as 
	  \[
	  	(a_1\mathbf{x_1}+a_2\mathbf{x_2}, b_1\mathbf{y_1}+
	  b_2\mathbf{y_2}, c_1\mathbf{z_1}+c_2\mathbf{z_2}),
	  \] 
	  say. This is possible since $\mathbf{x_1},\mathbf{x_2}$ and $\mathbf{y_1},\mathbf{y_2}$
	  and $\mathbf{z_1},\mathbf{z_2}$ are all bases for $\setP^1$. We can then see that
	  \[
	  	f(\mathbf{x},\mathbf{y},\mathbf{z})=a_1b_1c_1 f(\mathbf{x_1},\mathbf{y_1},\mathbf{z_1})
	  	+a_2b_2c_2 f(\mathbf{x_2},\mathbf{y_2},\mathbf{z_2}).
	  \]
	  Now we observe that $\mathbf{x}=a_1\mathbf{x_1}+a_2\mathbf{x_2}$.
	  Since $\mathbf{x_1}$ and $\mathbf{x_2}$ are linearly independent, we can invert the transformation
	  and get that $a_1=L_1(\mathbf{x})$ and $a_2=L_1'(\mathbf{x})$, say, where
	  $L_1$ and $L_1'$ are linear forms with coefficients in $\setQ$, depending on $\mathbf{x_1}$
	  and $\mathbf{x_2}$. We find similar expressions for $b_1,b_2$ and $c_1,c_2$ to deduce that
	  \[
	  	 f(\mathbf{x},\mathbf{y},\mathbf{z})=L_1(\mathbf{x})L_2(\mathbf{y})L_3(\mathbf{z})+
	  	 L_1'(\mathbf{x})L_2'(\mathbf{y})L_3'(\mathbf{z}).
	  \]
	  From this expression we can see directly that if $f$ has a singular point $(X,Y,Z)$ then it must 
	  have a linear factor, without loss of generality, $L(\mathbf{x})$, say.
	  This linear factor must vanish at a point $\mathbf{p}\in\setP$.
	  We would then have for all $\mathbf{y},\mathbf{z}\in\setP$ that
	  \[
	  	0=f(\mathbf{p},\mathbf{y},\mathbf{z})=\mathbf{y}^TM_{yz}(\mathbf{p})\mathbf{z}.
	  \]
	  This then gives that $M_{yz}(\mathbf{p})=0$ which implies as above that $D=0$.
	 \item[\ref{it:item1} $\Rightarrow$ \ref{it:item2}]
	 Conversely, assume that $D=0$. By Lemma \ref{lem:DQ0}, 
	 $\Delta_{xy}(\mathbf{z})=cL(\mathbf{z})^2$. We first consider the case when $c=0$.
	 In this case $f$ factorizes by Lemma \ref{lem:factorization}. 
	 We assume without loss of generality that $f$ splits as in \eqref{eq:eqfsplitstoLB}.
	 But in this case the gradient of $f$ is zero when $(\mathbf{x},\mathbf{y},\mathbf{z})$ is
	 picked such that $L(\mathbf{y})$ and $B(\mathbf{x},\mathbf{z})$ simultaneously vanish.	 
	 By this argument, we may assume that neither of the three
	 determinants $\Delta_{xy}(\mathbf{z})$, $\Delta_{yz}(\mathbf{x})$,  or
	 $\Delta_{zx}(\mathbf{y})$ vanishes identically. Next, we pick a primitive $\mathbf{z}$ such that
	 $\Delta_{xy}(\mathbf{z})=0$. Similarly, we pick primitive $\mathbf{x}$ and $\mathbf{y}$
	 such that  $\Delta_{yz}(\mathbf{x})=0$ and $\Delta_{zx}(\mathbf{y})=0$.
	 This implies that there are primitive vectors $\mathbf{x_1}$, $\mathbf{x_2}$,
	 $\mathbf{y_1}$, $\mathbf{y_2}$, and $\mathbf{z_1}$, $\mathbf{z_2}$ such that
	 \begin{align*}
	 	\mathbf{x_1}^TM_{xy}(\mathbf{z}) &= M_{xy}(\mathbf{z})\mathbf{y_1} =0, \\
	 	\mathbf{y_2}^TM_{yz}(\mathbf{x}) &= M_{yz}(\mathbf{x})\mathbf{z_1} =0, \\
	 	\mathbf{z_2}^TM_{zx}(\mathbf{y}) &= M_{zx}(\mathbf{y})\mathbf{x_2} =0.
	 \end{align*}
	 Note that $\mathbf{x_1}^TM_{xy}(\mathbf{z})=\mathbf{z}^TM_{zy}(\mathbf{x_1})=0$.
	 Thus, $\Delta_{yz}(\mathbf{x_1})=0$. We recall that the discriminant $\Delta_{yz}$ is $D=0$
	 and that $\Delta_{yz}$ does not vanish identically. Therefore,
	 $\mathbf{x_1}=\mathbf{x}$. By using the same idea, we can show
	 \[
	 	\mathbf{x_1}=\mathbf{x_2}=\mathbf{x},\quad \mathbf{y_1}=\mathbf{y_2}=\mathbf{y},\quad
	 	\mathbf{z_1}=\mathbf{z_2}=\mathbf{z},
	 \]
	 which proves that all partial derivatives of $f$ are simultaneously zero at the non-trivial
	 point $(\mathbf{x},\mathbf{y},\mathbf{z})$. 
	 \item[\ref{it:item2} $\Leftrightarrow$ \ref{it:item3}]
	 The fact that \ref{it:item2} $\Rightarrow$ \ref{it:item3} follows
	 from Euler's theorem for homogeneous polynomials. It states in particular that for a homogeneous
	 function $g(x_1,\ldots,x_d)$ of order $n$:
	 \[
	 	\sum_{i=1}^d x_i \frac{\partial g}{\partial x_i}= ng(\mathbf{x}).
	 \]
	 In our case, we may apply this result with $g=f$, $d=6$ and $n=3$.
	 It is then clear that \ref{it:item2} $\Rightarrow$ \ref{it:item3}. The converse is
	 trivial.
\end{itemize}
\end{proof}

Let $X_1,X_2,Y_1,Y_2,Z_1,Z_2$ be real numbers $\geq 1$. Our goal is to find an upper bound
for the quantity
\[
\begin{split}
	\mathcal{N}'(\mathbf{X},\mathbf{Y},\mathbf{Z})=
	\#\{(\mathbf{x},\mathbf{y},\mathbf{z}): &|x_i|\leq X_i,
	|y_i|\leq Y_i, |z_i|\leq Z_i,\\ 
	& (x_1,x_2)=(y_1,y_2)=(z_1,z_2)=1, 
	f(\mathbf{x},\mathbf{y},\mathbf{z})=0\}.
\end{split}.
\]
By Theorem \ref{thm:thmbilinear}, it suffices to study the case when $f$ is irreducible.
We note that if $f$ factorizes then $D=0$ but the converse may not
necessarily be true as the family of examples
\[
	f(\mathbf{x},\mathbf{y},\mathbf{z})=y_{{1}}x_{{1}}z_{{1}}a_{{111}}+y_{{1}}x_{{2}}z_{{1}}a_{{211}}+y_{{1}}x
_{{2}}z_{{2}}+y_{{2}}x_{{2}}z_{{1}}
\]
shows. Here, $D=0$ but $f$ is irreducible if $a_{111}\neq 0$. 
Let 
\[
	s=s(\mathbf{x},\mathbf{y},\mathbf{z})=
	\Delta_{xy}(\mathbf{z})\Delta_{yz}(\mathbf{x})\Delta_{zx}(\mathbf{y}).
\]
It turns out that the number of points counted by $\mathcal{N}'$ for which $s=0$
may be large, even if $D\neq 0$. For example, consider
\[
	f(\mathbf{x},\mathbf{y},\mathbf{z})=y_{{1}}x_{{1}}z_{{1}}a_{{111}}+y_{{1}}x_{{2}}z_{{2}}+y_{{2}}x_{{2}}z_{
{1}}+y_{{2}}x_{{2}}z_{{2}}
\]
for $a_{111}\neq 0$. Then, $f$ is irreducible, $D=a_{111}^2\neq 0$ and
\[
s(\mathbf{x},\mathbf{y},\mathbf{z})=
a_{111}^2x_2 y_1 z_1 \left( z_{{1}}+z_{{2}} \right) \left( y
_{{1}}+y_{{2}} \right) \left( a_{{111}}x_{{1}} -x_{{2}}
 \right).
\]
Furthermore, it is easy to see that the number of points contributing to
$\mathcal{N}'$ for which $s=0$ is $\gg X_1X_2+Y_1Y_2+Z_1Z_2$. Lemma \ref{lem:lemSingPts}
shows that this is in general also the best lower bound. We therefore exclude points
for which $s=0$ in the remaining argument.

We now state the main theorem of this section:
\begin{theorem}
Let $A=(a_{ijk})$ be a $2\times 2\times 2$ matrix with associated trilinear form $f=f_A$
and hyperdeterminant $D:=\det(A)$. We assume that $f$ is irreducible over $\setZ$ and that $D\neq 0$.
Let $X_1,X_2,Y_1,Y_2,Z_1,Z_2$ be real numbers $\geq 1$ and define
\[
\begin{split}
	\mathcal{N}(\mathbf{X},\mathbf{Y},\mathbf{Z})=
	\#\{&(\mathbf{x},\mathbf{y},\mathbf{z}): |x_i|\leq X_i,
	|y_i|\leq Y_i, |z_i|\leq Z_i,\\ 
	& (x_1,x_2)=(y_1,y_2)=(z_1,z_2)=1, 
	f(\mathbf{x},\mathbf{y},\mathbf{z})=0, s(\mathbf{x},\mathbf{y},\mathbf{z})\neq 0 \}.
\end{split}
\]
Let $T=\Vert f\Vert X_1X_2Y_1Y_2Z_1Z_2$. Then
\[
	\mathcal{N}(\mathbf{X},\mathbf{Y},\mathbf{Z})\ll
	T^\epsilon\left(\frac{\sqrt{X_1X_2Y_1Y_2Z_1Z_2}}{D^{1/4}}+\sqrt{X_1X_2Y_1Y_2}+Z_1Z_2\right).
\]
\end{theorem}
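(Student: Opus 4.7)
The strategy is to freeze $\mathbf{z}$ and apply Theorem~\ref{thm:thmbilinear} to the resulting bilinear form in $(\mathbf{x},\mathbf{y})$. For each primitive $\mathbf{z}$ in the box, $f=0$ reads $\mathbf{x}^{T}M_{xy}(\mathbf{z})\mathbf{y}=0$. Let $g(\mathbf{z})$ denote the $\gcd$ of the entries of $M_{xy}(\mathbf{z})$; dividing through produces a primitive bilinear form of determinant $\Delta_{xy}(\mathbf{z})/g(\mathbf{z})^{2}$. Since $s\neq 0$ forces $\Delta_{xy}(\mathbf{z})\neq 0$, Theorem~\ref{thm:thmbilinear} applies and, after absorbing the divisor function into $T^{\epsilon}$, yields
\[
    N_{\mathbf{z}}\ll T^{\epsilon}\left(g(\mathbf{z})\sqrt{\frac{X_{1}X_{2}Y_{1}Y_{2}}{|\Delta_{xy}(\mathbf{z})|}}+1\right).
\]
Summing over primitive $\mathbf{z}$ with $|z_{i}|\leq Z_{i}$, the ``$+1$'' term contributes at most $T^{\epsilon}Z_{1}Z_{2}$, and the main part is bounded by $T^{\epsilon}\sqrt{X_{1}X_{2}Y_{1}Y_{2}}\cdot S$ where
\[
    S:=\sum_{\mathbf{z}}\frac{g(\mathbf{z})}{\sqrt{|\Delta_{xy}(\mathbf{z})|}}.
\]

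The heart of the argument is to establish $S\ll T^{\epsilon}(\sqrt{Z_{1}Z_{2}}/|D|^{1/4}+1)$; combined with the two previous estimates this produces precisely the three terms of the theorem. To prove it, I would dyadically decompose $|\Delta_{xy}(\mathbf{z})|\sim M$. The key counting estimate is that, because $\Delta_{xy}$ is a binary quadratic form in $\mathbf{z}$ of discriminant exactly $D$ by the earlier lemma, the number $N(M)$ of $\mathbf{z}\in[-Z_{1},Z_{1}]\times[-Z_{2},Z_{2}]$ with $|\Delta_{xy}(\mathbf{z})|\leq M$ satisfies
\[
    N(M)\ll T^{\epsilon}\left(\frac{M}{\sqrt{|D|}}+1\right),
\]
and is also trivially $\ll Z_{1}Z_{2}$. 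The factor $g(\mathbf{z})$ is treated by a further dyadic decomposition on $g(\mathbf{z})\sim G$: the relation $G\mid M_{xy}(\mathbf{z})$ confines $\mathbf{z}$ to a sublattice, while $G^{2}\mid\Delta_{xy}(\mathbf{z})$ keeps each summand below $1$, so this costs only a $T^{\epsilon}$ loss. Substituting into the dyadic sum and balancing the two regimes for $N(M)$ at $M\asymp Z_{1}Z_{2}\sqrt{|D|}$ then gives the desired bound on $S$.

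The main obstacle is the counting estimate for $N(M)$. In the definite case ($D<0$) it follows from an ellipse area computation. In the indefinite case ($D>0$) the region $|\Delta_{xy}|\leq M$ is unbounded in $\mathbb{R}^{2}$ and may accumulate many lattice points near the isotropic lines, so one needs the divisor bound on representations by a binary quadratic form together with a regulator contribution---both harmlessly absorbed into $T^{\epsilon}$.
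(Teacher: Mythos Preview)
Your outline matches the paper's argument: freeze $\mathbf{z}$, apply Theorem~\ref{thm:thmbilinear}, and reduce everything to the level-set bound $N(M)\ll T^{\epsilon}(M/|D|^{1/2}+1)$ for the quadratic form $\Delta_{xy}$, balanced against the trivial bound $Z_1Z_2$ at $M\asymp Z_1Z_2|D|^{1/2}$. Two points of execution differ. First, the paper handles the content $g(\mathbf{z})$ not by a dyadic decomposition on $G$ but by proving (Lemma~\ref{lem:q2dD}) that any such common divisor $q$ satisfies $q^{2}\mid D$, so there are only $\dfunc(D)$ values; for each $q$ it then passes, via Lemma~\ref{lem:lattice}, to the sublattice $\{\mathbf{z}:q\mid M_{xy}(\mathbf{z})\}$ of determinant $q$ and works with the transformed form of discriminant $q^{2}D$. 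Your dyadic route is equivalent but you should make explicit that the sublattice has determinant exactly $G$ (this is where Lemma~\ref{lem:lattice} is needed) so that the $G$'s cancel. Second, for the indefinite case of $N(M)$ you invoke the representation theory of binary quadratic forms (divisor bound plus regulator), whereas the paper stays entirely elementary: it factors $\Delta_{xy}'=\alpha L_1L_2$ over $\setR$, dyadically localises $|L_i(\mathbf{z})|\sim K,L$ with $KL\ll M/|\alpha|$, and applies the lattice-point-in-ellipse Lemma~\ref{lem:lemEllipse} once more, using $|\det|=|\beta_1-\beta_2|$ and $|\alpha|\,|\beta_1-\beta_2|=|D|^{1/2}$. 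The paper's route avoids any appeal to class numbers or regulators and keeps the whole proof self-contained with lattice geometry.
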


We note in particular, that the condition $D\geq 1$ yields the estimate
\[
\mathcal{N}(\mathbf{X},\mathbf{Y},\mathbf{Z})\ll
	T^\epsilon\sqrt{X_1X_2Y_1Y_2Z_1Z_2}.
\]
This can easily be obtained by permuting $\mathbf{x}$, $\mathbf{y}$ and $\mathbf{z}$ in the theorem and then taking the minimum of the resulting estimates.

We define for any primitive vector $\mathbf{z}$:
\[
	f_\mathbf{z}(\mathbf{x},\mathbf{y}):=f(\mathbf{x},\mathbf{y},\mathbf{z}).
\]
With this notation, we think of $f_\mathbf{z}$ as a bilinear form in the variables $\mathbf{x},\mathbf{y}$. Let the coefficients of $f_\mathbf{z}$ be the linear forms $L_z^{(i)}=L_z^{(i)}(\mathbf{z})$, where
$i=1,\ldots,8$. We may think of $L_z^{(i)}$ as a 1x2 row vector,
the elements of $L_z^{(i)}$ being the coefficients of the form $L_z^{(i)}(\mathbf{z})$.
Let $i,j\in\{1,\ldots,8\}$. We consider a $2\times 2$ matrix which has $L_z^{(i)}$ as its first row
and $L_z^{(j)}$ as its second row. Let $D_{z}^{(i,j)}$ be the absolute value of the determinant
of this matrix.
We define analogously $f_\mathbf{y},L_y^{(i)}, D_{y}^{(i,j)}$ and
$f_\mathbf{x},L_x^{(i)}, D_{x}^{(i,j)}$.

\begin{lemma}\label{lem:q2dD}
	Let $q$ be a non-zero integer. Assume that there exists a primitive vector $\mathbf{z}$
	such that $q\divides L_z^{(i)}(\mathbf{z})$ for all $i$. Then $q^2\divides D$.
\end{lemma}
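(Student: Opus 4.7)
The hypothesis that $q\mid L_z^{(i)}(\mathbf{z})$ for every $i$ says precisely that every entry of the $2\times 2$ matrix $M_{xy}(\mathbf{z})$ is divisible by $q$; equivalently, $M_{xy}(\mathbf{z})=qN$ for some integer matrix $N$. The plan is to combine this with the previously established fact that $D$ is the discriminant of the binary quadratic form $\Delta_{xy}(\mathbf{z})=\det(M_{xy}(\mathbf{z}))$, together with the invariance of such discriminants under unimodular substitutions, so as to read off the $q^2$-divisibility of $D$ after a convenient change of basis.

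First I would use primitivity of $\mathbf{z}$ to choose $\mathbf{z}'\in\setZ^2$ making $\{\mathbf{z},\mathbf{z}'\}$ a $\setZ$-basis of $\setZ^2$. Writing a general vector as $a\mathbf{z}+b\mathbf{z}'$ and using linearity of the hypermatrix-vector product gives
\[
    M_{xy}(a\mathbf{z}+b\mathbf{z}')=a\,qN+b\,P,
\]
with $P:=M_{xy}(\mathbf{z}')$ an integer matrix. Applying the $2\times 2$ determinant expansion
\[
    \det(X+Y)=\det X+\det Y + X_{11}Y_{22}+Y_{11}X_{22}-X_{12}Y_{21}-Y_{12}X_{21}
\]
to $\det(aqN+bP)$ produces the quadratic form
\[
    \Delta_{xy}(a\mathbf{z}+b\mathbf{z}')=q^2\det(N)\,a^2+qT\,ab+\det(P)\,b^2,
\]
for some $T\in\setZ$; the factor of $q$ in the cross term is immediate, since every monomial there involves an entry of $qN$.

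The discriminant of this form in $(a,b)$ equals $(qT)^2-4q^2\det(N)\det(P)$, which is manifestly divisible by $q^2$. On the other hand, the substitution $(z_1,z_2)\leftrightarrow (a,b)$ is effected by the matrix with columns $\mathbf{z},\mathbf{z}'$, whose determinant is $\pm 1$; since the discriminant of a binary quadratic form scales by the square of the determinant of such a substitution, it is preserved here. Because the original discriminant of $\Delta_{xy}$ is $D$, this forces $q^2\mid D$. The only parts of the argument requiring any care are the unimodular invariance of the discriminant and the divisibility of the cross term $T$ by $q$, both of which are routine verifications; no genuine obstacle arises.
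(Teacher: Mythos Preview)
Your argument is correct and is genuinely different from the paper's. The paper works on the coefficient side: from $q\mid L_z^{(i)}(\mathbf{z})$ and the primitivity of $\mathbf{z}$ it shows that $q$ divides every $2\times 2$ minor $D_z^{(i,j)}$ formed from pairs of the linear forms $L_z^{(i)}$; these minors are exactly the six determinants $a_{ij1}a_{kl2}-a_{ij2}a_{kl1}$, and since the explicit Cayley formula exhibits $D$ as an integer combination of products of two such minors, $q^2\mid D$ follows. Your route instead stays on the quadratic-form side: you complete $\mathbf{z}$ to a unimodular basis, observe that $\Delta_{xy}$ in the new coordinates has leading coefficient divisible by $q^2$ and middle coefficient divisible by $q$, and then invoke the invariance of the discriminant under $\mathrm{SL}_2(\setZ)$ together with the already-proved identity $\mathrm{disc}(\Delta_{xy})=D$. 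Your argument is cleaner in that it bypasses the explicit verification that $D$ decomposes into products of the six minors; the paper's argument, on the other hand, yields the sharper intermediate fact that $q$ divides each individual minor, which is of independent use elsewhere (for instance in verifying the hypotheses of Lemma~\ref{lem:lattice} when handling the lattice condition on $\mathbf{z}$). One small wording slip: in your closing sentence you refer to ``the divisibility of the cross term $T$ by $q$'', but what you actually need (and have already shown) is that the cross coefficient is $qT$ with $T\in\setZ$, i.e.\ the cross coefficient---not $T$ itself---is divisible by $q$.
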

\begin{proof}
Let
\[
	g_z=\gcd_{i,j} D_z^{(i,j)}.
\]
Our first aim is to prove that $q\divides g_z$. We fix $i$ and $j$ and want to show that
$q\divides D_z^{(i,j)}$. If $D_z^{(i,j)}=0$ then we are done. Thus, we assume that $D_z^{(i,j)}\neq 0$.
Let $L_z^{(i)}(\mathbf{z})=az_1+bz_2$ and $L_z^{(j)}(\mathbf{z})=cz_1+dz_2$ so that
$D_z^{(i,j)}=ad-bc$. There exists integers $k_1$ and $k_2$ such that
\begin{align*}
	az_1+bz_2=k_1q\quad \text{ and }\quad cz_1+dz_2=k_2q.
\end{align*}
So that
\begin{align*}
	(bc-ad)z_2 &=q(ck_1-ak_2),\\
	(bc-ad)z_1 &=-q(dk_1-bk_2)
\end{align*}
We observe that $q\divides (bc-ad)z_1$ and that $q\divides (bc-ad)z_2$.
Since $z_1$ and $z_2$ are coprime, we therefore get that $q\divides (ad-bc)(z_1,z_2)=ad-bc$. 
This proves that $q\divides D_z^{(i,j)}$ and since $i$ and $j$ were arbitrary,
we may deduce that $q\divides g_z$.
It can be shown by direct calculations that $g_z$ is the greatest common factor of the following
six 2x2 determinants:
\begin{alignat*}{3}
	&a_{111} a_{122} - a_{112} a_{121}, && \qquad
     a_{111} a_{212} - a_{112} a_{211}, && \qquad
     a_{111} a_{222} - a_{112} a_{221},\\
    &a_{212} a_{121} - a_{211} a_{122}, && \qquad
     a_{121} a_{222} - a_{122} a_{221}, && \qquad
     a_{211} a_{222} - a_{212} a_{221}
\end{alignat*}
From this it follows at once that $q^2\divides D$.
\end{proof}

\begin{lemma}\label{lem:allLizero}
	Assume that there exists a primitive vector $\mathbf{z}$
	such that $L_z^{(i)}(\mathbf{z})=0$ for all $i$. Then $f$ factorizes or vanishes identically.
\end{lemma}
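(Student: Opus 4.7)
The plan is to exploit the fact that each coefficient $L_z^{(i)}$ of $f_\mathbf{z}$ is a linear form in only the two variables $(z_1,z_2)$, so that if $L_z^{(i)}$ is not identically zero then its zero set in $\setP^1$ consists of a single point. By hypothesis every $L_z^{(i)}$ vanishes at the common primitive point $\mathbf{z}$, forcing all of the $L_z^{(i)}$ to cut out the same projective line.

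First I would introduce the primitive integer linear form $L(\mathbf{u}) := z_2 u_1 - z_1 u_2$, whose unique zero in $\setP^1$ is the class of $\mathbf{z}$. Since $\gcd(z_1,z_2)=1$, any integer linear form in $\mathbf{u}$ vanishing at $\mathbf{z}$ must be an \emph{integer} multiple of $L$; this yields constants $c_i\in\setZ$ with $L_z^{(i)}(\mathbf{u}) = c_i L(\mathbf{u})$ for every $i$.

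Next I would substitute these expressions back into the defining identity $f(\mathbf{x},\mathbf{y},\mathbf{u}) = \sum_i L_z^{(i)}(\mathbf{u})\, m_i(\mathbf{x},\mathbf{y})$, where the $m_i$ run over the bilinear monomials in $(\mathbf{x},\mathbf{y})$. Pulling out the common factor $L(\mathbf{u})$ gives
\[
    f(\mathbf{x},\mathbf{y},\mathbf{u}) = L(\mathbf{u})\cdot B(\mathbf{x},\mathbf{y}),\qquad B(\mathbf{x},\mathbf{y}) := \sum_i c_i\, m_i(\mathbf{x},\mathbf{y}),
\]
an integer bilinear form in $(\mathbf{x},\mathbf{y})$. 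If $B\equiv 0$ then $f$ vanishes identically; otherwise $f$ factorizes over $\setZ$ as the linear form $L$ in $\mathbf{z}$ times the non-zero bilinear form $B$.

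I do not anticipate a serious obstacle. The only point requiring care is checking that the proportionality constants $c_i$ can be taken in $\setZ$ rather than merely in $\setQ$, and that is exactly where the primitivity hypothesis on $\mathbf{z}$ enters; without it one would still obtain $c_i\in\setQ$, which already suffices for the factorization conclusion in the sense used in Lemma \ref{lem:factorization}.
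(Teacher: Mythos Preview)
Your proof is correct and follows essentially the same idea as the paper's: both arguments show that every $L_z^{(i)}$ is a scalar multiple of the unique (up to sign) primitive linear form vanishing at $\mathbf{z}$, and then pull this common factor out of $f$. Your packaging is in fact slightly cleaner, since introducing $L(\mathbf{u})=z_2u_1-z_1u_2$ handles the cases $z_1=0$ and $z_2=0$ uniformly, whereas the paper treats those separately before concluding that the $L_z^{(i)}$ are pairwise proportional.
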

\begin{proof}
Let $L_z^{(i)}(\mathbf{z})=a_iz_1+b_iz_2$ ($i=1,\ldots,8$), say and define
$a_i'=a_i/(a_i,b_i)$ and $b_i'=b_i/(a_i,b_i)$ in the case when $(a_i,b_i)\neq 0$.
We recall that $z_1$ and $z_2$ are co-prime.
We assume that all $L_z^{(i)}$ vanish simultaneously for the same $\mathbf{z}$.
If $z_1=0$ then $z_2=\pm 1$ and therefore $b_i'=0$ and $a_i'=\pm 1$ for all $i$.
This implies that $b_i=0$ for all $i$, which means that $f$ has a linear factor $z_1$ and the 
conclusion in the lemma is valid. Thus we may assume that $z_1\neq 0$ and similarly $z_2\neq 0$.
If $z_1z_2\neq 0$ and
$L_z^{(i)}(\mathbf{z})=0$ then either $a_i=b_i=0$ or $\mathbf{z}=\pm (b_i',-a_i')$.
This implies that either all $L_z^{(i)}$ are identically $0$ or that all the $L_z^{(i)}$ are pairwise proportional. In the first case, $f$ vanishes identically and in the second case, $f$ has a linear factor. This proves the lemma.
\end{proof}

\begin{lemma}\label{lem:lemSingPts}
	Assuming that $f$ is irreducible over $\setZ$, the number of points 
	$(\mathbf{x},\mathbf{y},\mathbf{z})$ counted by $\mathcal{N}$
	for which $\Delta_{xy}(\mathbf{z})=0$ is $O(X_1 X_2+Y_1 Y_2)$.
\end{lemma}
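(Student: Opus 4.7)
The plan is to exploit the hypothesis $D\neq 0$, which forces $\Delta_{xy}(\mathbf{z})$ to be a quadratic form in $\mathbf{z}$ of non-zero discriminant. Since its discriminant equals $D$, the equation $\Delta_{xy}(\mathbf{z})=0$ cuts out at most two points of $\setP^1(\setC)$, and in particular admits at most two primitive integer solutions $\mathbf{z}$ up to sign. It therefore suffices to show that, for each fixed such $\mathbf{z}$, the number of primitive pairs $(\mathbf{x},\mathbf{y})$ with $|x_i|\leq X_i$, $|y_i|\leq Y_i$ and $f(\mathbf{x},\mathbf{y},\mathbf{z})=0$ is $O(X_1X_2+Y_1Y_2)$; summing over the $O(1)$ admissible $\mathbf{z}$ then yields the claimed bound.

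Fix such a $\mathbf{z}$. Then $M_{xy}(\mathbf{z})$ is singular. The first genuinely non-trivial step is to rule out the possibility $M_{xy}(\mathbf{z})\equiv 0$: the four entries of $M_{xy}$ are linear forms in $\mathbf{z}$, so their simultaneous vanishing at $\mathbf{z}$ would force each of them to be divisible by the linear form $L(\mathbf{z})$ vanishing on $\mathbf{z}$. But then $L(\mathbf{z})$ would divide $f$, contradicting irreducibility. Hence $M_{xy}(\mathbf{z})$ has rank exactly one, and can therefore be written as $\mathbf{u}\mathbf{v}^{T}$ for non-zero rational (and, after clearing denominators, integer) column vectors $\mathbf{u}$ and $\mathbf{v}$. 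Consequently
\[
	f(\mathbf{x},\mathbf{y},\mathbf{z})=\mathbf{x}^{T}M_{xy}(\mathbf{z})\mathbf{y}=L_1(\mathbf{x})L_2(\mathbf{y}),
\]
with $L_1(\mathbf{x})=\mathbf{u}^{T}\mathbf{x}$ and $L_2(\mathbf{y})=\mathbf{v}^{T}\mathbf{y}$ both non-zero linear forms.

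It remains to count primitive solutions $(\mathbf{x},\mathbf{y})$ of $L_1(\mathbf{x})L_2(\mathbf{y})=0$ in the box. As in the bilinear proof, a non-zero linear equation $L_1(\mathbf{x})=0$ has only $O(1)$ primitive solutions (after dividing by the gcd of the coefficients, $\mathbf{x}$ is determined up to sign), and the remaining variable $\mathbf{y}$ contributes at most $O(Y_1Y_2)$; symmetrically, $L_2(\mathbf{y})=0$ contributes $O(X_1X_2)$. Adding these and multiplying by the at most two choices of $\mathbf{z}$ yields $O(X_1X_2+Y_1Y_2)$. The main obstacle is the rank-one reduction, which relies crucially on irreducibility to exclude the vanishing of $M_{xy}(\mathbf{z})$; everything else is routine bookkeeping.
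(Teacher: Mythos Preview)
Your argument is correct and follows essentially the same route as the paper: show there are only $O(1)$ admissible $\mathbf{z}$, rule out $M_{xy}(\mathbf{z})=0$ using irreducibility, factor the resulting rank-one bilinear form as $L_1(\mathbf{x})L_2(\mathbf{y})$, and count. The only minor difference is that the paper obtains the non-vanishing of $\Delta_{xy}$ from irreducibility alone (via Lemma~\ref{lem:factorization}), whereas you invoke the additional standing hypothesis $D\neq 0$; and the paper cites Lemma~\ref{lem:allLizero} for the step $M_{xy}(\mathbf{z})\not\equiv 0$, which you reprove inline.
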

\begin{proof}
We first note that $\Delta_{xy}$ cannot vanish identically by Lemma \ref{lem:factorization}.
Thus, there are $O(1)$ choices for $\mathbf{z}$. We fix one such $\mathbf{z}$ and
consider the equation $f_{\mathbf{z}}(\mathbf{x},\mathbf{y})=0$ as a bilinear equation
with coefficients $L_z^{(i)}(\mathbf{z})=a_iz_1+b_iz_2$ ($i=1,\ldots,8$), say.
By Lemma \ref{lem:allLizero}, the function $f_{\mathbf{z}}(\mathbf{x},\mathbf{y})$ cannot vanish
identically. The equation $f_{\mathbf{z}}(\mathbf{x},\mathbf{y})=0$ is therefore saying that a non-zero bilinear form vanishes.
This bilinear form will factorize as a product of linear forms
$f_{\mathbf{z}}(\mathbf{x},\mathbf{y})=L_{1,\mathbf{z}}(\mathbf{x}) L_{2,\mathbf{z}}(\mathbf{y})$
because $\Delta_{xy}(\mathbf{z})=0$. It is clear that the equation
$L_{1,\mathbf{z}}(\mathbf{x}) L_{2,\mathbf{z}}(\mathbf{y})=0$ has $\ll X_1X_2+Y_1Y_2$ solutions.
\end{proof}

Fix a primitive integer vector $\mathbf{z}$.
We want to apply Theorem \ref{thm:thmbilinear} for bilinear equations to $f_\mathbf{z}$. 
By assumption, $\Delta(f_\mathbf{z})\neq 0$.
If there exists an integer $q$ dividing all $L_z^{(i)}$ then $q^2\divides D$ by Lemma \ref{lem:q2dD}.
Let $L_i'=L_z^{(i)}/q$. We divide the equation $f_\mathbf{z}(\mathbf{x},\mathbf{y})=0$
by $q$ so that we get $f'(\mathbf{x},\mathbf{y})=0$,
where $f'$ is a bilinear form with coprime coefficients and non-zero determinant 
$\Delta(f')=\Delta_{xy}(\mathbf{z})/q^2$. Thus, we may deduce that:
\[
	\mathcal{N}:\ll \sum_{q^2\divides D}\sum_{\substack{\mathbf{z}: |z_i|\leq Z_i\\
											  (z_1,z_2)=1\\
											  q\divides L_z^{(j)}(\mathbf{z})\\
											 |\Delta_{xy}(\mathbf{z})|\geq 1}}
		\dfunc(\Delta_{xy}(\mathbf{z}))
	\left(\frac{q\sqrt{X_1X_2Y_1Y_2}}{\sqrt{|\Delta_{xy}(\mathbf{z})|}}+1\right).
\]
Next, we process the condition that 
$q\divides L_z^{(j)}(\mathbf{z})$ for all $j$.
Let $C$ be the $4\times 2$ matrix having the $L_z^{(j)}$ as rows.
Since $f$ is irreducible, there exists no prime $p$ that divides all elements of $C$.
Since $q\divides L_z^{(j)}(\mathbf{z})$, we have as in the proof of
of Lemma \ref{lem:q2dD} that $q$ divides all 2x2 minors of $C$. Therefore, we may apply Lemma
\ref{lem:lattice} and deduce that the elements $\mathbf{z}$ counted by the above inner sum are in a
lattice $\Lambda_{q}$ of determinant $q$. We further observe that
the points $\mathbf{z}$ counted by the inner sum are also in the ellipse $E(Z_1,Z_2)$ given by
$(z_1/Z_1)^2+(z_2/Z_2)^2\ll 1$. This ellipse has area $A(E(Z_1,Z_2))\asymp Z_1Z_2$. 
By Lemma \ref{lem:lemEllipse}, there exists a basis $\mathbf{b}_1$,
$\mathbf{b}_2$ of the lattice $\Lambda_q$ and a positive number $\alpha$ 
such that if we write $\mathbf{z}\in E(Z_1,Z_2)$
as $\mathbf{z}=G\mathbf{z}'$ then $|z_1'|\leq\alpha$ and $|z_2'|\ll Z_1Z_2/(\alpha q)$.
Here, $G$ is a $2\times 2$ matrix with columns $\mathbf{b}_1$ and $\mathbf{b}_2$.
Note that $\Delta_{xy}$ is a quadratic form of discriminant $D$ and thus, the discriminant of the
new quadratic form $\Delta'_{xy}(\mathbf{z}')=\Delta_{xy}(G\mathbf{z}')$ is $q^2 D$. We further note that if $\Delta_{xy}'(\mathbf{z'})=0$ then $\Delta_{xy}(\mathbf{z})=0$, and thus it is safe to assume that 
$|\Delta_{xy}'(\mathbf{z'})|\geq 1$. By following the proof of Lemma \ref{lem:lemEllipse},
we can see that 
\[
	\Vert\Delta_{xy}'\Vert\ll\Vert\Delta_{xy}\Vert P(D,Z_1,Z_2), 
\]
where $P(D, Z_1,Z_2)$ is a finite power of $D Z_1Z_2$. Thus, after a change of variables
$\mathbf{z}=G_q\mathbf{z}'$ for each $q$, we arrive at the estimate
\[
	\mathcal{N}\ll \sum_{q^2\divides D}\sum_{\substack{\mathbf{z'}: |z_i'|\leq Z_i'\\
											  (z_1',z_2')=1\\
											 |\Delta_{xy}'(\mathbf{z'})|\geq 1}}
						d(\Delta_{xy}'(\mathbf{z}'))
	\left(\frac{q\sqrt{X_1X_2Y_1Y_2}}{\sqrt{|\Delta_{xy}'(\mathbf{z'})|}}+1\right),
\]
where $Z_1'Z_2'\ll Z_1Z_2/q$. If $Z_1'<1$ then
$z_1'=0$ and $z_2'=\pm 1$. A similar argument holds if $Z_2'<1$. Thus, we note that
\[
	1\leq |\Delta_{xy}'(\mathbf{z'})|\ll \Vert\Delta_{xy}\Vert P(D,Z_1,Z_2),
\]
where again $P(D,Z_1,Z_2)$ denotes some finite power of $DZ_1Z_2$. We may therefore deduce the 
trivial estimate
\[
	\sum_{q^2\divides D}\sum_{\substack{\mathbf{z'}: |z_i'|\leq Z_i'\\
											  (z_1',z_2')=1\\
											 |\Delta_{xy}'(\mathbf{z'})|\geq 1}}
	d(\Delta_{xy}'(\mathbf{z}'))\ll T^\epsilon Z_1 Z_2
\]
and it remains to find an upper bound for the remaining sum. Thus, we may split the
sum over $\mathbf{z}'$ into dyadic ranges for $|\Delta_{xy}'(\mathbf{z'})|$. In particular, there exists
an integer $R$ such that $1\leq R\ll \Vert\Delta_{xy}\Vert P(D,Z_1,Z_2)$ and 
\[
	\mathcal{N}\ll T^\epsilon \sqrt{X_1X_2Y_1Y_2}
	\sum_{q^2\divides D}q
	\sum_{\substack{\mathbf{z'}: |z_i'|\leq Z_i'\\
								(z_1',z_2')=1\\
								R\leq |\Delta_{xy}'(\mathbf{z'})|<2R}}
	\frac{1}{\sqrt{|\Delta_{xy}'(\mathbf{z'})|}}+T^\epsilon Z_1Z_2
\]
We proceed to attack the inner sum. For $q,n>0$ with $q^2\divides D$, let
\[
	a_{n,q}:=\card{\mathbf{z}'\in\setZ^2: |z_i'|\leq Z_i', (z_1',z_2')=1, |\Delta_{xy}(\mathbf{z}')|=n}.
\]
Then
\[
	\mathcal{N}\ll T^\epsilon \sqrt{X_1X_2Y_1Y_2}
	\sum_{q^2\divides D}qS_q(R)+T^\epsilon Z_1 Z_2,
\]
where 
\[
	S_q(R)=\sum_{R\leq n<2R}\frac{a_{n,q}}{\sqrt{n}}.
\]
We now need to find an upper bound for $S_q(R)$. For $t\geq 1$, let
\[
	A_q(t):=\sum_{1\leq m<2t}a_{m,q}.
\]
By partial summation, we obtain
\[
	S_q(R)\ll \frac{A_q(2R)}{\sqrt{R}}+\frac{A_q(R)}{\sqrt{R}}+\int_R^{2R}\frac{A_q(t)}{t^{3/2}}\,dt.
\]
It remains to find an upper bound for $A_q(t)$. We prove the following lemma:
\begin{lemma}
The following upper bound holds:
\[
	A_q(t)\ll\min\left\{ \frac{Z_1Z_2}{q}+1, \left(\frac{t}{qD^{1/2}}+1\right) (Tt)^\epsilon \right\}
\]	
\end{lemma}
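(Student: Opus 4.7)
My plan is to establish the two bounds in the minimum separately. The first, $A_q(t) \ll Z_1 Z_2/q + 1$, is immediate: $A_q(t)$ is bounded by the number of primitive lattice points in the box $|z_i'| \leq Z_i'$, and that is $\ll Z_1' Z_2' + 1 \ll Z_1 Z_2/q + 1$ since by construction $Z_1' Z_2' \ll Z_1 Z_2/q$.

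For the main bound $A_q(t) \ll (t/(qD^{1/2}) + 1)(Tt)^\epsilon$, I would drop primitivity (an upper bound) and count lattice points $\mathbf{z}' \in \setZ^2$ with $|z_i'| \leq Z_i'$ and $1 \leq |\Delta'_{xy}(\mathbf{z}')| < 2t$, where $\Delta'_{xy}$ is a binary quadratic form of discriminant $q^2 D$. I split on the sign of $D$. In the definite case $D < 0$, the region $\{|\Delta'_{xy}(\mathbf{z}')| \leq 2t\}$ is an ellipse of area $\ll t/(q|D|^{1/2})$, so the lattice point count is $\ll t/(qD^{1/2}) + 1$ by a standard convex-region estimate, without any $T^\epsilon$ loss. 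In the indefinite case $D > 0$, I would use the identity $4A\,\Delta'_{xy}(z_1', z_2') = (2A z_1' + B z_2')^2 - q^2 D z_2'^2$ (writing $\Delta'_{xy}(z_1',z_2') = A z_1'^2 + B z_1' z_2' + C z_2'^2$) and proceed by slicing: for each fixed $z_2'$, the admissible $z_1'$ are contained in at most two intervals, of length $\ll \sqrt{t/|A|}$ in the central regime $|z_2'| \lesssim \sqrt{|A|t/(q^2 D)}$ and $\ll t/(q\sqrt{D}|z_2'|)$ in the outer regime; summing over $z_2'$ yields $\ll t/(q\sqrt{D}) (1 + \log Z_2')$, with the log absorbed into $(Tt)^\epsilon$. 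Equivalently, one may count representations: the number of integers $m \in [1, 2t]$ representable by $\Delta'_{xy}$ is $\ll t/(q\sqrt{D}) + 1$ by Dirichlet's ideal-counting bound in the quadratic order of discriminant $q^2 D$, and the number of primitive $\mathbf{z}'$ in the box representing each such $m$ is $\ll (Tt)^\epsilon$ via the divisor-function bound and a logarithmic bound on unit orbits.

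The main obstacle is the indefinite case, where the level region is unbounded, so the box constraint is essential and one picks up logarithmic factors near the asymptotes of the hyperbolic strips; these must be absorbed into the $(Tt)^\epsilon$ factor. A secondary technical point is the contribution from very large $|z_2'|$, where the admissible $u$-interval has length below the spacing $2|A|$ of the image lattice $\{(u,z_2') : u \equiv B z_2' \pmod{2|A|}\}$; here each slice contributes at most $O(1)$, and the total such contribution is controlled either by the first bound $Z_1 Z_2/q + 1$ in the min or by an equidistribution/congruence argument on $2A z_1' + B z_2' \pmod{2|A|}$. Finally, if the leading coefficient $A$ vanishes, one swaps the roles of $z_1'$ and $z_2'$, or applies a preliminary unimodular change of variable, before running the slicing argument.
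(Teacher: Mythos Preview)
Your treatment of the first bound and of the definite case $D<0$ is correct and essentially matches the paper: both reduce to the ellipse lattice-point lemma (Lemma~\ref{lem:lemEllipse}) after a linear change of variables.

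In the indefinite case your route diverges from the paper's, and the slicing argument as written does not prove the second bound in the minimum. The ``$+1$ per slice'' contributes $O(Z_2')$ in total, and nothing prevents $Z_2'$ from being much larger than $t/(q\sqrt{D})$. Appealing to the first bound $Z_1Z_2/q+1$ does not rescue this, because the lemma asserts the two bounds \emph{separately}; only afterwards is their minimum taken. Your equidistribution/congruence suggestion is plausible but is genuinely delicate when $D$ is a perfect square, since then the roots $\beta_i$ are rational and sporadic slices with a single solution can occur along a full arithmetic progression in $z_2'$. The Dirichlet/representation-number alternative you sketch does work in principle, but needs care with the content of $\Delta'_{xy}$, the error term in the ideal count, and the bound on automorphism orbits meeting the box.

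The paper sidesteps all of this by a different decomposition. It factors $\Delta'_{xy}=\alpha L_1 L_2$ over $\setR$ with $L_i(\mathbf{z}')=z_1'-\beta_i z_2'$, observes from $1\le|\Delta'_{xy}(\mathbf{z}')|$ and the box constraint that each $|L_i|$ lies in a range of polynomial size in the data, and then dyadically decomposes \emph{both} $|L_1|$ and $|L_2|$. On each of the $O((Tt)^\epsilon)$ dyadic cells $K\le|L_1|<2K$, $L\le|L_2|<2L$, the map $\mathbf{z}'\mapsto(L_1,L_2)$ sends $\setZ^2$ to a real lattice of determinant $|\beta_1-\beta_2|$, and the cell sits inside a genuine ellipse of area $\ll KL\ll t/|\alpha|$. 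Lemma~\ref{lem:lemEllipse} then gives $\ll t/(|\alpha|\,|\beta_1-\beta_2|)+1=t/(q\sqrt{D})+1$ points per cell, and summing over cells yields exactly $(t/(q\sqrt{D})+1)(Tt)^\epsilon$. The degenerate case $\Delta'_{xy}=b\,z_1'z_2'$ is handled directly via a divisor bound. The key point is that the hyperbolic region $|\alpha L_1L_2|\le 2t$ is cut into $O((Tt)^\epsilon)$ ellipses \emph{before} the lattice lemma is invoked, so the dangerous ``$+1$'' appears only $O((Tt)^\epsilon)$ times rather than once per $z_2'$-slice.
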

\begin{proof}
	Note that
	\[
		A_q(t)=\card{\mathbf{z}\in\setZ^2: |z_i|\leq Z_i', (z_1,z_2)=1,
		1\leq|\Delta_{xy}'(\mathbf{z})|\leq t},
	\]
	where we recall that $Z_1'$ and $Z_2'$ are positive numbers such that $Z_1'Z_2'\ll Z_1Z_2/q$ and
	$\Delta_{xy}'$ is a quadratic form with integer coefficients such that $D(\Delta_{xy}')=q^2D$.
	If $Z_1'<1$ then $z_1=0$ and $z_2=\pm 1$. Thus, the case when $Z_1'<1$ or $Z_2'<1$
	contributes $O(1)$ to $A_q(t)$. If $Z_1'\geq 1$ and $Z_2'\geq 1$ then
	\[
		A_q(t)\ll (Z_1'+1)(Z_2'+1)\ll \frac{Z_1Z_2}{q}+1.
	\]
	This proves the first bound.
	Next, let us consider the case when 
	\[
		\Delta_{xy}'(\mathbf{z})=\alpha (z_1-\beta_1 z_2) (z_1-\beta_2 z_2)
								=\alpha L_1(\mathbf{z})L_2(\mathbf{z}),
	\]
	say, for some $\alpha\in\setZ\setminus\{0\}$ and $\beta_i\in\setC$. Note that in general
	$D(\Delta_{xy}')=|\alpha|^2|\beta_1-\beta_2|^2$. Such a factorization exists if 
	and only if the coefficient of $z_1^2$ in $\Delta'$ is non-zero.
	We first consider the case when $\beta_1=\beta_2$. In this case we can deduce that 
	$q^2D=D(\Delta_{xy}')=0$, which is impossible since $D\neq 0$.\bigskip\\ 
	Next consider the case when $\beta_1=\bar{\beta_2}=a+b i$, with $a,b\in\setR$ and $b\neq 0$. Then
	\[
		|\Delta_{xy}'(\mathbf{z})|=|\alpha| ((x-ay)^2+(by)^2).
	\]
	We set $r=x-ay$ and $s=by$. Then the point $(r,s)$ is inside a real lattice $\Lambda$
	given by the matrix
	\[
		\left(\begin{array}{cc}
		1 & -a \\ 0 & b \\	
		\end{array} \right).
	\] 
	We note that $\det(\Lambda)=|b|$ and that the point $(r,s)$ also lies in the circle given by
	\[
		r^2+s^2\leq t/|\alpha|.
	\]
	This circle has area $\pi t/|\alpha|$. Thus, by Lemma \ref{lem:lemEllipse}, the number of
	possibilities for $(r,s)$ is
	\[
		\ll \frac{t}{|\alpha||b|}+1\ll \frac{t}{qD^{1/2}}+1,
	\]
	where in the last line we used that $q^2D=D(\Delta_{xy}')=4b^2\alpha^2$.
	Each choice of $(r,s)$ gives at most one choice for $\mathbf{z}$ since $b\neq 0$.\bigskip\\
	We may therefore assume that $\beta_1$ and $\beta_2$ are real.
	By the solution formula
	for quadratic polynomials, we observe that $\beta_i\ll \Vert\Delta'\Vert/|\alpha|$ 
	and therefore,
	$\alpha L_i(\mathbf{z})\ll \Vert\Delta'\Vert Z$, where $Z=\max\{Z_1',Z_2'\}$. 
	Since $1\leq |\Delta_{xy}'(\mathbf{z'})|$,
	we also note that $|L_i(\mathbf{z})|\gg 1/(\Vert\Delta'\Vert Z)$. We therefore conclude that
	\[
		\frac{1}{\Vert\Delta'\Vert Z}\ll |L_i(\mathbf{z})|\ll\Vert\Delta'\Vert Z.
	\]
	We proceed by splitting the ranges of $L_1(\mathbf{z})$ and $L_2(\mathbf{z})$ into dyadic ranges
	so that 
	\[
		K\leq |L_1(\mathbf{z})|<2K\text{ and } L\leq |L_2(\mathbf{z})|<2L
	\]
	for some integers $K$ and $L$. The dyadic subdivision comes at the cost of a factor
	$(\Vert\Delta_{xy}\Vert D Z_1Z_2 t)^\epsilon$ in the estimate for $A_q(t)$
	By setting $r=L_1(\mathbf{z})$ and $s=L_2(\mathbf{z})$ we can again see that the points $(r,s)$
	are in a real lattice $\Lambda$ with $|\det(\Lambda)|=|\beta_1-\beta_2|$ and they are in an ellipse
	of shape
	\[
		(r/K)^2+(s/L)^2\ll 1
	\]
	and area $\ll KL\ll \frac{t}{|\alpha|}$. And thus, we obtain once again by Lemma
	\ref{lem:lemEllipse} that the number of possibilities for the points $(r,s)$
	is 
	\[
		\ll (\Vert\Delta_{xy}\Vert D Z_1Z_2 t)^\epsilon (t/(qD^{1/2})+1).
	\]
	The linear transformation defining $(r,s)$ is invertible
	because $\beta_1\neq \beta_2$. Thus, for each choice $(r,s)$ there is again at 
	most one possible value for $\mathbf{z}$.
	This finishes the proof of the lemma if the coefficient
	of $z_1^2$ in $\Delta'$ is non-zero. The case when the coefficient of $z_2^2$ is non-zero
	is similar.\bigskip\\
	Therefore, we may now concentrate on the last case when 
	$\Delta_{xy}'(\mathbf{z})=b z_1 z_2$ for some integer $b\neq 0$.
	We recall that $D(\Delta_{xy}')=q^2D=b^2$. In particular, $b=qD^{1/2}$.
	We observe that 
	\[
		A_q(t)\ll \card{\mathbf{z}\in\setZ^2: 1\leq |b z_1 z_2|\leq t}
		\ll t^\epsilon\left(\frac{t}{b}+1\right)
		\ll t^\epsilon\left(\frac{t}{qD^{1/2}}+1\right).
	\]
\end{proof}
We note that $q<\sqrt{R}$ because $q^2\divides\Delta_{xy}\ll R$.
This gives
\[
	S_q(R)\ll (TR)^\epsilon \left(
	\frac{1}{q}\min\left\{ \frac{Z_1Z_2}{\sqrt{R}}, \frac{\sqrt{R}}{D^{1/2}} \right\}
	+\frac{1}{\sqrt{R}}\right)
	\ll \frac{T^\epsilon}{q}\left(\frac{\sqrt{Z_1 Z_2}}{D^{1/4}}+1\right),
\]
where the critical value is obtained when $R=Z_1Z_2 D^{1/2}$. This proves the theorem.

\addcontentsline{toc}{section}{Bibliography}
\bibliographystyle{plain}

\end{document}